\newtheorem{theorem}{Theorem}[section]
\newtheorem{corollary}[theorem]{Corollary}
\newtheorem{lemma}[theorem]{Lemma}
\newtheorem{proposition}[theorem]{Proposition}
\newtheorem{example}[theorem]{Example}
\def\NN{\hbox{\sf I\kern-.13em\hbox{N}}}
\def\RR{\hbox{\sf I\kern-.14em\hbox{R}}}
\def\Cc{\hbox{\sf C\kern -.47em {\raise .48ex \hbox{$\scriptscriptstyle |$}}
   \kern-.5em {\raise .48ex \hbox{$\scriptscriptstyle |$}} }}
\newcommand{\be}{\begin{equation}}
\newcommand{\ee}{\end{equation}}
\newcommand{\cS}{{\mathcal S}}
\newcommand{\cF}{{\mathcal F}}
\newcommand{\cC}{{\mathcal C}}
\newcommand{\cM}{{\mathcal M}}
\newcommand{\tr}{{\rm tr} \,}
\newcommand{\rank}{{\rm rank} \,}
\begin{document}

\baselineskip 7.2mm

\title{On semigroups of nonnegative functions and positive operators}
\let\thefootnote\relax\footnote{The paper will appear in 
Journal of Mathematical Analysis and Applications, http://dx.doi.org/10.1016/j.jmaa.2013.03.003 .}
\author{Roman Drnov\v{s}ek, Heydar Radjavi}

\begin{abstract}
\baselineskip 7mm
We give extensions of results on nonnegative matrix semigroups which deduce finiteness or boundedness of such semigroups from the corresponding local properties, e.g., from finiteness or boundedness of values of certain linear functionals applied to them.
We also consider more general semigroups of functions.
\end{abstract}

\maketitle

\noindent
{\it Math. Subj. Classification (2000)}:  20M20, 15B48\\
{\it Key words}: nonnegative functions, semigroups, nonnegative matrices, positive operators
 
\vspace{5mm}
\section{Introduction}
\vspace{3mm}

The general theme of this paper is extracting finiteness or boundedness information about a semigroup from hypotheses of local finiteness or 
local boundedness. Recent results in references cited below include answers to questions of the following type: 
Let $\cS$ be a (multiplicative) semigroup of nonnegative $n \times n$ matrices, i.e., members of $\cM_n (\RR^+)$. Assume $\cS$ is
indecomposable, that is, has no invariant subspace spanned by a nonempty, proper subset of the standard basis vectors.
Consider a positive linear functional $\varphi$ on $\cM_n (\RR)$. Under what conditions would finiteness or boundedness  of 
$\varphi(\cS)$ imply the same property for $\cS$ itself? Our purpose here is to give extensions of several of these results.
Also, some of our proofs are substantially shorter than the original ones.

\vspace{5mm}
\section{Entry-wise boundedness}
\vspace{3mm}

In this section we give extensions of the results in \cite{GPRS} that deduce boundedness of all entries in 
an indecomposable matrix semigroup from that of values of a given positive linear functional. 
We start with an observation that could also be useful in network and graph theory.

Let $X$ be the set of vertices in a digraph.
Define weights on the edges of the digraph by a function $\mu: X \times X \mapsto [-\infty, \infty)$, 
with the understanding that $\mu(x, y) = -\infty$ indicates that the edge from the vertex $x$ to the vertex $y$ is not present.
Loops (edges incident at both ends to the same vertex) are also allowed.
The weight of any path in the digraph is the sum of the weights of the edges in the path. 
Let $W_\mu (x, y)$ be the supremum of the weights of all paths from $x$ to $y$. This defines a function
$W_\mu: X \times X \mapsto [-\infty, \infty]$. 
{\bf Let us consider only the case when $\infty$ is not in the range of $W_\mu$.}
In this case the function $W_\mu: X \times X \mapsto [-\infty, \infty)$ obviously satisfies the inequality
\begin{equation}
\label{triangle}
W_\mu(x, y) + W_\mu(y, z) \le  W_\mu(x, z) ,
\end{equation}
under the usual extended arithmetic on $[-\infty, \infty)$.

\begin{proposition} 
\label{digraph}
(a) If there is a vertex $x_0 \in X$ such that either $W_\mu(x, x_0) \in \RR$ for all $x \in X$
or $W_\mu(x_0,x) \in \RR$ for all $x \in X$, then there exists a function $\rho : X \mapsto \RR$ such that 
\begin{equation}
\label{dominated}
W_\mu (x, y) \le \rho(x) - \rho(y)
\end{equation}
for all $x$ and $y$ in $X$. 

(b) Suppose there is a positive constant $K$ such that $W_\mu (x, y) \le K$ for all $x$ and $y$ in $X$. 
Then there exists a function $\lambda: X \times X \mapsto [-K, K]$ such that $\mu \le \lambda$ and 
$W_\mu (x, y) \le W_{\lambda} (x, y) \le K$ for all $x$ and $y$ in $X$. Consequently, 
there exists a function $\rho : X \mapsto  [-K, K]$ such that the inequality (\ref{dominated}) holds.
\end{proposition}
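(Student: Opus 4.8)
The plan is to treat (a) first, since (b) will reduce to it. For (a) I would exploit the triangle inequality (\ref{triangle}) directly by turning the distinguished vertex $x_0$ into a potential. Suppose first that $W_\mu(x, x_0) \in \RR$ for every $x$, and set $\rho(x) = W_\mu(x, x_0)$. Applying (\ref{triangle}) with $z = x_0$ gives $W_\mu(x,y) + W_\mu(y, x_0) \le W_\mu(x, x_0)$, and since $W_\mu(y, x_0)$ is finite one may rearrange this to $W_\mu(x,y) \le \rho(x) - \rho(y)$, which is exactly (\ref{dominated}). In the remaining case, when $W_\mu(x_0, x) \in \RR$ for every $x$, I would instead put $\rho(x) = -W_\mu(x_0, x)$ and apply (\ref{triangle}) with first vertex $x_0$, namely $W_\mu(x_0, x) + W_\mu(x, y) \le W_\mu(x_0, y)$; rearranging again yields (\ref{dominated}). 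Thus (a) is essentially a one-line consequence of the triangle inequality.

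For (b) I would produce $\lambda$ by truncating $\mu$ from below: define $\lambda(x,y) = \max\{\mu(x,y), -K\}$. Then $\mu \le \lambda$ by construction, and $\lambda(x,y) \le K$ since $\mu(x,y) \le W_\mu(x,y) \le K$ and $-K \le K$; hence $\lambda$ maps into $[-K, K]$. Because raising edge weights can only raise path weights, $\mu \le \lambda$ gives $W_\mu \le W_\lambda$ at once. The substantive point, and the step I expect to be the main obstacle, is the upper bound $W_\lambda(x,y) \le K$.

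To establish it I would analyze an arbitrary $\lambda$-path $v_0 \to v_1 \to \cdots \to v_m$ from $x$ to $y$. Call an edge \emph{raised} if $\mu(v_{i-1}, v_i) < -K$, so that there $\lambda = -K$ strictly exceeds $\mu$ (this includes all non-edges of $\mu$); on every non-raised edge one has $\lambda = \mu \ge -K$, so such an edge is genuinely present in the $\mu$-digraph. Deleting the raised edges splits the path into maximal runs of non-raised edges; each run is an honest $\mu$-path between its endpoints (a single vertex of weight $0$ when the run is empty), so its $\lambda$-weight equals its $\mu$-weight and is at most $K$, being bounded either by the corresponding value of $W_\mu \le K$ or equal to $0$. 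If the path has $r$ raised edges it then consists of $r+1$ such runs, whence its total $\lambda$-weight is at most $(r+1)K - rK = K$, each raised edge contributing exactly $-K$. Taking the supremum over all paths gives $W_\lambda(x,y) \le K$.

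Finally, the \emph{Consequently} clause follows by feeding $\lambda$ into part (a). Since $\lambda$ is finite everywhere and $W_\lambda$ is bounded above by $K$, for any fixed $x_0$ we have $-K \le \lambda(x, x_0) \le W_\lambda(x, x_0) \le K$, so $W_\lambda(\cdot, x_0)$ is real-valued on $X$ and part (a), applied to $\lambda$, produces $\rho(x) = W_\lambda(x, x_0) \in [-K, K]$ with $W_\lambda(x,y) \le \rho(x) - \rho(y)$. Combining this with $W_\mu \le W_\lambda$ yields (\ref{dominated}) for $\mu$, completing the proof.
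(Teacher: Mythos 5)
Your proof is correct and follows essentially the same route as the paper: the same potential $\rho(x)=W_\mu(x,x_0)$ (resp.\ $-W_\mu(x_0,x)$) in (a), the same truncation $\lambda=\max\{\mu,-K\}$ in (b) with the identical run-splitting argument for $W_\lambda\le K$, and the same reduction of the final claim to part (a) via $-K\le\lambda\le W_\lambda\le K$. Your explicit count $(r+1)K-rK=K$ just makes precise what the paper states in words.
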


\begin{proof}
(a) If $W_\mu(x, x_0) \in \RR$ for all $x \in X$, then define 
$\rho(x) = W_\mu(x, x_0)$ and observe that 
$$ W_\mu(x, y) \le  W_\mu(x, x_0) -  W_\mu(y, x_0) = \rho(x) - \rho(y)$$
by the inequality (\ref{triangle}). Similarly, if $W_\mu(x_0, x) \in \RR$ for all $x \in X$, then for the function 
$\rho(x) = - W_\mu(x_0, x)$ the inequality (\ref{dominated}) holds.

(b) Define the function $\lambda$ as follows: any edge of the complete digraph on $X$ that has a $\mu$-weight below $-K$ 
(including a $\mu$-weight of $-\infty$) is bumped up to a $\lambda$-weight of $-K$, while the weights of the other edges 
are unchanged. Since any path in the complete digraph is a concatenation of alternating paths whose weights have not changed 
(and thus are at most $K$) and the paths
made up of edges that have gained new weights (all of which are $-K$), the $\lambda$-weight
of any path on $X$ is at most $K$, so that  $W_{\lambda} (x, y) \le K$ for all $x$ and $y$ in $X$.
The last assertion then follows from (a) and its proof, since
$$ W_{\lambda} (x, y) \ge  \lambda(x, y) \ge - K $$ 
for all $x$ and $y$ in $X$.
\end{proof}

Applying the exponential function to $\mu$, $W_\mu$ and $\rho$ we obtain the following multiplicative analogue of Proposition \ref{digraph}.
When applied to actual matrix semigroups, this result will be shown to yield more familiar-sounding corollaries.

\begin{corollary} 
\label{cycles}
Let $X$ be an arbitrary set, and let $f: X \times X \mapsto [0, \infty)$ be a function such that
$$ C_f(x, y) = \sup \{ f(x, x_1) f(x_1, x_2) f(x_2, x_3) \cdots f(x_k, y) : k \in \NN \cup \{0\}, \ x_1, \ldots, x_k \in X \} < \infty $$
for all $x$ and $y$ in $X$. 

(a) If there is a point $x_0 \in X$ such that either $C_f(x, x_0) > 0$ for all $x \in X$ or $C_f(x_0, x) > 0$ for all $x \in X$, 
then there exists  a function $d : X \mapsto (0, \infty)$ such that 
\begin{equation}
\label{bounded}
f(x, y) \le C_f(x, y) \le \frac{d(x)}{d(y)}
\end{equation}
for all $x$ and $y$ in $X$. 

(b) If there is a constant $M \ge 1$ such that $C_f (x, y) \le M$ for all $x$ and $y$ in $X$, then 
there exists a function $d : X \mapsto [\frac{1}{M}, M]$ such that {\rm (\ref{bounded})} holds.
\end{corollary}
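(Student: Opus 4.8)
The plan is to derive this corollary from Proposition \ref{digraph} by passing to logarithms, exactly as the sentence preceding the statement suggests. First I would set $\mu(x,y) = \log f(x,y)$ for all $x, y \in X$, with the convention $\log 0 = -\infty$, so that $\mu : X \times X \mapsto [-\infty, \infty)$. The weight of a path $x \to x_1 \to \cdots \to x_k \to y$ under $\mu$ is then $\log\bigl(f(x,x_1) f(x_1,x_2) \cdots f(x_k,y)\bigr)$, and since $\log$ is increasing and continuous (sending $0$ to $-\infty$) it commutes with the supremum. Hence $W_\mu(x,y) = \log C_f(x,y)$ for all $x, y \in X$. The hypothesis $C_f(x,y) < \infty$ guarantees $W_\mu(x,y) < \infty$, so $\infty$ is not in the range of $W_\mu$ and Proposition \ref{digraph} applies.

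For part (a), I would observe that the condition $C_f(x,x_0) > 0$ for all $x$ translates, via $W_\mu = \log C_f$, into $W_\mu(x,x_0) \in \RR$ for all $x$ (finite because $C_f(x,x_0)$ is both strictly positive and finite); likewise $C_f(x_0,x) > 0$ becomes $W_\mu(x_0,x) \in \RR$. Proposition \ref{digraph}(a) then furnishes $\rho : X \mapsto \RR$ with $W_\mu(x,y) \le \rho(x) - \rho(y)$. Setting $d(x) = e^{\rho(x)} \in (0,\infty)$ and exponentiating this inequality yields $C_f(x,y) = e^{W_\mu(x,y)} \le e^{\rho(x)-\rho(y)} = d(x)/d(y)$. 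Since $C_f(x,y) \ge f(x,y)$ (the trivial one-edge path with $k=0$ is among those entering the supremum), this is precisely (\ref{bounded}).

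For part (b), the bound $C_f(x,y) \le M$ becomes $W_\mu(x,y) \le \log M$. Here I would first dispose of the degenerate case $M = 1$ directly by taking $d \equiv 1$, since then $C_f(x,y) \le 1 = d(x)/d(y)$. When $M > 1$, I set $K = \log M > 0$; Proposition \ref{digraph}(b) then produces $\rho : X \mapsto [-K, K]$ satisfying (\ref{dominated}), so that $d(x) = e^{\rho(x)}$ takes values in $[e^{-K}, e^{K}] = [\frac{1}{M}, M]$, with (\ref{bounded}) again following by exponentiation as in (a).

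The argument is essentially a dictionary translation, so there is no deep obstacle; the only points demanding care are the interchange of $\log$ with the supremum, together with the bookkeeping of the convention $\log 0 = -\infty$ when some factor (hence some product) vanishes, and the fact that Proposition \ref{digraph}(b) is stated for a strictly positive constant $K$, which is what forces the separate, trivial treatment of the boundary case $M = 1$.
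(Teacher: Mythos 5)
Your proposal is correct and is exactly the paper's intended argument: the paper derives Corollary \ref{cycles} from Proposition \ref{digraph} by applying the exponential function to $\mu$, $W_\mu$ and $\rho$, which is precisely your logarithm/exponential dictionary. Your separate treatment of the boundary case $M=1$ (forced by the positivity of $K$ in Proposition \ref{digraph}(b)) is a careful touch the paper leaves implicit, but it is not a different route.
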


Extending a definition from \cite{GPRS}, a function $f: X \times X \mapsto [0, \infty)$ is called {\it compressed} if 
$f(x, y) f(y, z) \le f(x, z)$ for all $x$, $y$ and $z$ in $X$. 

The following corollary extends Lemmas 6 and 16 of \cite{GPRS}.

\begin{corollary}
\label{compressed}
Let $M \ge 1$ be a real number and let $f: X \times X \mapsto [0, M]$ be a compressed function.
Then there exists a function $d : X \mapsto [\frac{1}{M}, M]$ such that 
$$   f(x, y) \le \frac{d(x)}{d(y)}  $$
for all $x$ and $y$ in $X$.
\end{corollary}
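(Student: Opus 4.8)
The plan is to deduce this immediately from Corollary \ref{cycles}(b) once I verify its hypothesis, namely that the associated supremum $C_f(x,y)$ is bounded by $M$ for all $x,y \in X$. The entire content of the argument lies in showing that the compressed inequality forces every finite product appearing in the definition of $C_f$ to collapse back down to the single value $f(x,y)$, which is at most $M$ by assumption.

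First I would fix $x, y \in X$ and an arbitrary chain $x_1, \ldots, x_k \in X$, and consider the product $P = f(x,x_1) f(x_1,x_2) \cdots f(x_k, y)$. Using the compressed inequality $f(x,x_1) f(x_1,x_2) \le f(x,x_2)$ together with the nonnegativity of $f$ (so that multiplying through by the remaining factors preserves the inequality), I would collapse the first two factors into $f(x,x_2)$, obtaining $P \le f(x,x_2) f(x_2,x_3) \cdots f(x_k,y)$. Iterating this — a straightforward induction on $k$ — telescopes the whole product down to $f(x,y)$, so that $P \le f(x,y) \le M$. Taking the supremum over all $k \in \NN \cup \{0\}$ and all chains (the case $k=0$ being just $f(x,y) \le M$) gives $C_f(x,y) \le M < \infty$ for all $x,y \in X$.

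With this bound in hand, the finiteness hypothesis of Corollary \ref{cycles} is satisfied, and part (b) of that corollary applies with the same constant $M \ge 1$. It yields a function $d : X \mapsto [\frac{1}{M}, M]$ with $f(x,y) \le C_f(x,y) \le \frac{d(x)}{d(y)}$ for all $x$ and $y$, and in particular $f(x,y) \le \frac{d(x)}{d(y)}$, which is exactly the claimed inequality.

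There is no serious obstacle here: the only point demanding care is the telescoping step, where one must track that each application of the compressed inequality is multiplied through by the product of the remaining (nonnegative) factors, and that the base case $k=0$ is handled directly by the hypothesis $f \le M$. The substance of the result is really carried by Corollary \ref{cycles}(b); the present statement is the special case in which the supremum defining $C_f$ is controlled directly by the structural (compressed) assumption on $f$ rather than by an auxiliary finiteness hypothesis.
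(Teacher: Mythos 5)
Your proof is correct and takes essentially the same route as the paper: the paper simply observes that the compressed property forces $C_f(x,y) = f(x,y)$ (your telescoping argument, written out in more detail) and then invokes Corollary \ref{cycles}(b). Nothing is missing; yours is just a more explicit version of the same one-line argument.
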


\begin{proof}
Clearly, we have $C_f(x,y) = f(x,y)$ for all $x$ and $y$ in $X$, and so Corollary \ref{cycles} (b) can be applied.
\end{proof}

A set $\cS$ of nonnegative functions on $X \times X$ is {\it indecomposable} if, for every $x$, $y \in X$, 
there exists $f \in \cS$ such that $f(x,y) > 0$, and it is said to be {\it bounded entry-wise} if 
$\sup \{ f(x,y) : f \in \cS \} < \infty$ for every $x$, $y \in X$. 

We now consider semigroups of functions that generalize semigroups of (not necessarily finite) matrices.
A set $\cS$ of nonnegative functions on $X \times X$, closed under a given associative operation $*$, is called a {\it matrix-like semigroup} if 
$$ (f*g) (x,z) \ge f(x,y) \, g(y,z) $$
for all $f$, $g \in \cS$ and $x$, $y$, $z \in X$. 

\begin{lemma}
\label{bounded entry-wise}
Let $\cS$ be a matrix-like semigroup of functions on $X \times X$ that is bounded entry-wise.
Then the function $s(x,y) = \sup \{ f(x,y) : f \in \cS \}$ is compressed.
\end{lemma}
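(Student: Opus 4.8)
The plan is to verify the defining inequality of compressedness directly, namely to show that $s(x,y)\, s(y,z) \le s(x,z)$ for every triple $x$, $y$, $z \in X$. Since $\cS$ is bounded entry-wise, each value $s(x,y)$ is a finite nonnegative real number, so there is no difficulty with infinite quantities; the whole argument takes place in $[0,\infty)$.

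First I would fix $x$, $y$, $z$ and exploit the two structural hypotheses in combination. For \emph{any} pair $f, g \in \cS$, the semigroup is closed under $*$, so $f*g \in \cS$, and the matrix-like inequality gives
$$ s(x,z) \ge (f*g)(x,z) \ge f(x,y)\, g(y,z) . $$
The first inequality is just the definition of $s$ as a supremum over $\cS$ evaluated at $(x,z)$, and the second is the defining property of a matrix-like semigroup. This single chain is the heart of the matter.

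Next I would take suprema on the right-hand side. Because $f$ and $g$ range over $\cS$ independently, and because both factors $f(x,y)$ and $g(y,z)$ are nonnegative, the supremum of the product factors as the product of the suprema:
$$ \sup_{f, g \in \cS} f(x,y)\, g(y,z) = \Bigl( \sup_{f \in \cS} f(x,y) \Bigr) \Bigl( \sup_{g \in \cS} g(y,z) \Bigr) = s(x,y)\, s(y,z) . $$
Combining this with the displayed inequality yields $s(x,z) \ge s(x,y)\, s(y,z)$, which is exactly the compressedness condition.

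I do not anticipate a genuine obstacle here; the only point requiring a moment's care is the factorization of the supremum of a product into a product of suprema, which relies essentially on nonnegativity and on the independence of the choices of $f$ and $g$. If one prefers to avoid quoting that factorization, one can instead argue with an $\varepsilon$-approximation: pick $f$ with $f(x,y) > s(x,y) - \varepsilon$ and $g$ with $g(y,z) > s(y,z) - \varepsilon$, deduce $s(x,z) > (s(x,y)-\varepsilon)(s(y,z)-\varepsilon)$, and let $\varepsilon \to 0^+$, treating the trivial case where one of the factors vanishes separately.
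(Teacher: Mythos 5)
Your proposal is correct and follows the same route as the paper: fix $x,y,z$, use the chain $f(x,y)\,g(y,z) \le (f*g)(x,z) \le s(x,z)$ for arbitrary $f,g \in \cS$, then pass to suprema over $f$ and $g$ independently. The paper's proof is exactly this (it simply states "it follows" where you spell out the factorization of the supremum), so your extra care about that step is the only difference.
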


\begin{proof}
For all $f$, $g \in \cS$, we have 
$f(x,y) g(y,z) \le (f*g)(x,z) \le s(x,z)$
for all $x$, $y$ and $z$ in $X$. It follows that 
$s(x,y) s(y,z) \le s(x,z)$, as asserted.
\end{proof}

The following theorem is an extension of \cite[Theorem 17]{GPRS}.

\begin{theorem}
\label{bounded_functions}
Let $M \ge 1$ be a real number and let $\cS$ be a matrix-like semigroup of functions from $X \times X$ to $[0, M]$.
Then there exists a function $d : X \mapsto [\frac{1}{M}, M]$ such that 
$$ f(x, y) \le \frac{d(x)}{d(y)} $$
for all $f \in \cS$ and $x$, $y \in X$. 
\end{theorem}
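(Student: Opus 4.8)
The plan is to build the desired domination function $d$ out of the pointwise supremum of the semigroup, reducing everything to the two results just proved. First I would observe that the hypothesis that every $f \in \cS$ maps into $[0, M]$ makes $\cS$ automatically bounded entry-wise, with the uniform bound $M$: indeed $\sup \{ f(x,y) : f \in \cS \} \le M < \infty$ for every $x, y \in X$. This lets me introduce the supremum function $s(x,y) = \sup \{ f(x,y) : f \in \cS \}$, which is well defined and takes values in $[0, M]$.

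The next step is to invoke Lemma \ref{bounded entry-wise}, which applies precisely because $\cS$ is a matrix-like semigroup that is bounded entry-wise. It delivers the key structural fact that $s$ is \emph{compressed}, i.e.\ $s(x,y)\, s(y,z) \le s(x,z)$ for all $x, y, z \in X$. This is really the crux of the argument, although the work has already been done: the inequality $f(x,y)\, g(y,z) \le (f*g)(x,z) \le s(x,z)$ coming from the matrix-like property is what turns the supremum into a compressed function, and everything downstream is a formal consequence.

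With $s : X \times X \mapsto [0, M]$ now known to be compressed, I would apply Corollary \ref{compressed} directly to $s$. This produces a function $d : X \mapsto [\frac{1}{M}, M]$ such that
$$ s(x, y) \le \frac{d(x)}{d(y)} $$
for all $x, y \in X$. Finally, since $f(x,y) \le s(x,y)$ for every $f \in \cS$ by the definition of $s$, chaining the two inequalities gives $f(x,y) \le d(x)/d(y)$ for all $f \in \cS$ and all $x, y \in X$, which is exactly the assertion.

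In short, I expect no genuine obstacle at the level of this theorem: the statement is an assembly of Lemma \ref{bounded entry-wise} (supremum is compressed) followed by Corollary \ref{compressed} (compressed functions bounded by $M$ are dominated by a ratio $d(x)/d(y)$ with $d$ valued in $[\frac{1}{M}, M]$). The only point requiring care is confirming that the uniform range bound $[0,M]$ simultaneously furnishes entry-wise boundedness (needed for the Lemma) and the range constraint on $s$ (needed for the Corollary), so that both earlier results are legitimately applicable to the same function $s$.
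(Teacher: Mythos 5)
Your proof is correct and follows exactly the paper's own argument: form the supremum function $s$, apply Lemma \ref{bounded entry-wise} to see that $s$ is compressed, and then apply Corollary \ref{compressed} to $s$, noting that $s$ inherits the range bound $[0,M]$. The paper's proof is just a compressed (two-sentence) version of yours.
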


\begin{proof}
By Lemma \ref{bounded entry-wise}, the function $s(x,y) = \sup \{ f(x,y) : f \in \cS \}$ is compressed.
Since it maps to $[0, M]$, the conclusion of the theorem follows from Corollary \ref{compressed}.
\end{proof}

\begin{corollary}
Let $M \ge 1$ be a real number and let $(X, \mu)$ be an atomic $\sigma$-finite measure space with the property 
that the measure of each atom is at least $1$.  
Let $\cS$ be a semigroup of integral operators on $L^2(X,  \mu)$
whose integral kernels map to the interval $[0, M]$. Then there exists a function $d : X \mapsto [\frac{1}{M}, M]$ 
such that, for the multiplication operator $D$ on $L^2(X,  \mu)$ induced by $d$, the integral kernel of $D^{-1} S D$ maps to 
$[0, 1]$ for every $S \in \cS$. 
\end{corollary}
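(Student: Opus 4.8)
The plan is to reduce the corollary to Theorem~\ref{bounded_functions} by discretizing the atomic measure space. Since $(X,\mu)$ is $\sigma$-finite and atomic, up to a set of measure zero $X$ is a countable disjoint union $\bigcup_{i \in I} A_i$ of atoms, each of finite measure $m_i = \mu(A_i)$; by hypothesis every $m_i \ge 1$. A measurable function on an atom is almost everywhere constant, and the product $A_i \times A_j$ is an atom of $(X \times X, \mu \times \mu)$; hence each kernel $K_S$ is a.e. constant on every block $A_i \times A_j$, with value I denote $(K_S)_{ij} \in [0,M]$. Setting $f_S(i,j) = (K_S)_{ij}$ produces, for each $S \in \cS$, a function $f_S : I \times I \to [0,M]$.

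The key step is to recognise $\cF = \{ f_S : S \in \cS \}$ as a matrix-like semigroup. First I compute the blockwise kernel of a composition $ST$ by pairing it against indicator functions: since each $m_i < \infty$ we have $\chi_{A_i} \in L^2(X,\mu)$, and evaluating $\langle ST\, \chi_{A_k}, \chi_{A_i} \rangle$ in two ways yields
$$ (K_{ST})_{ik} = \sum_{j \in I} (K_S)_{ij}\,(K_T)_{jk}\, m_j . $$
Because the entries are nonnegative and every $m_j \ge 1$, the full sum dominates any one of its terms with its weight removed, so $(K_{ST})_{ik} \ge (K_S)_{ij}(K_T)_{jk}$ for all $i,j,k$. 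Defining $f_S * f_T := f_{ST}$ --- which is well defined since $S \mapsto f_S$ is injective (blockwise-equal constants force a.e.-equal kernels, hence equal operators) and associative --- this reads $(f_S * f_T)(i,k) \ge f_S(i,j)\,f_T(j,k)$, so $\cF$ is a matrix-like semigroup of functions into $[0,M]$. This blockwise convolution identity, and especially the use of the hypothesis $m_j \ge 1$ to absorb the measure weights, is the crux of the proof; I expect the remaining difficulty to be purely technical, namely justifying the measure-theoretic reductions (a.e. constancy on atom blocks, finiteness of atoms, and the convolution identity itself) that make this step rigorous.

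Theorem~\ref{bounded_functions} now furnishes a function $d : I \to [\frac{1}{M}, M]$ with $(K_S)_{ij} = f_S(i,j) \le d(i)/d(j)$ for every $S \in \cS$ and all $i,j \in I$. Extending $d$ to a measurable function on $X$ that equals $d(i)$ throughout $A_i$, the induced multiplication operator $D$ is bounded and boundedly invertible because $\frac{1}{M} \le d \le M$. Finally, a direct computation gives the kernel of $D^{-1} S D$ as $\frac{d(y)}{d(x)} K_S(x,y)$, whose value for $x \in A_i$, $y \in A_j$ is $\frac{d(j)}{d(i)}(K_S)_{ij} \le \frac{d(j)}{d(i)} \cdot \frac{d(i)}{d(j)} = 1$; as it is also nonnegative, the kernel of $D^{-1} S D$ maps to $[0,1]$ for every $S \in \cS$, which is the assertion.
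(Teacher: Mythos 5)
Your proof is correct and takes essentially the same route as the paper: both identify the kernels of the operators in $\cS$ as a matrix-like semigroup of functions into $[0,M]$, using the hypothesis that each atom has measure at least $1$ to absorb the measure weight in the convolution $k_{ST}(x,z) = \int_X k_S(x,u)k_T(u,z)\,d\mu(u) \ge k_S(x,y)k_T(y,z)\,\mu(A_y)$, and then invoke Theorem \ref{bounded_functions} to produce $d$ and conjugate by the multiplication operator $D$. The only difference is presentational: the paper argues directly with kernels as functions on $X \times X$, whereas you discretize to the index set of atoms, which makes the a.e.-constancy and well-definedness bookkeeping explicit but is not a different idea.
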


\begin{proof}
The set of all integral kernels $\{k_S : S \in \cS\}$ of operators in $\cS$ is 
a matrix-like semigroup of functions from $X \times X$ to $[0, M]$ with the semigroup operation
$k_S * k_T = k_{S T}$. Indeed, we have 
$$ k_{S T}(x, z) = \int_X \! k_S(x,u) k_T(u,z) d\mu(u) \geq k_S(x,y) k_T(y,z) \mu(\{A_{y}\}) \ge k_S(x,y) k_T(y,z) $$
for all $S$, $T \in \cS$ and $x$, $y$, $z \in X$. 
Here $A_{y}$ denotes the atom containing the point $y$.

By Theorem \ref{bounded_functions}, there exists a function $d : X \mapsto [\frac{1}{M}, M]$ such that 
$$ k_S (x, y) \le \frac{d(x)}{d(y)} $$
for all $S \in \cS$ and $x$, $y \in X$, so that  
$$ k_{D^{-1} S D} (x, y) =  \frac{1}{d(x)} k_S (x, y) d(y) \in [0, 1] . $$
\end{proof}

The next theorem is an extension of \cite[Theorem 19]{GPRS}.

\begin{theorem}
\label{bounded_semigroup}
Let $\cS$ be an indecomposable matrix-like semigroup of nonnegative functions on $X \times X$.
If there exist $u$, $v \in X$ such that $\sup \{ f(u,v) : f \in \cS \} < \infty$,
then there exists a function $d : X \mapsto (0, \infty)$ such that 
$$ f(x, y) \le \frac{d(x)}{d(y)} $$
for all $f \in \cS$ and $x$, $y \in X$.  
\end{theorem}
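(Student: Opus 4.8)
The plan is to reduce everything to Corollary \ref{cycles}(a) applied to the pointwise supremum $s(x,y) = \sup \{ f(x,y) : f \in \cS \}$. The entire difficulty is concentrated in showing that $s$ takes only finite values; once that is established, indecomposability together with the earlier results finishes the argument almost mechanically, so this is essentially a sharpening of the proof of Theorem \ref{bounded_functions} in which the global bound is replaced by a single-pair bound that gets propagated.

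First I would prove that $s$ is finite everywhere, which is the main obstacle. Fix $x, y \in X$. By indecomposability, choose $g, h \in \cS$ with $g(u,x) > 0$ and $h(y,v) > 0$ (this is available for every pair, including diagonal ones, so no special case is needed when $u = x$ or $y = v$). For an arbitrary $f \in \cS$, the element $g * f * h$ again lies in $\cS$ by associativity and closure, and applying the matrix-like inequality twice yields
\[ (g*f*h)(u,v) \ge g(u,x)\, f(x,y)\, h(y,v) . \]
Since $(g*f*h)(u,v) \le \sup \{ F(u,v) : F \in \cS \} < \infty$ by hypothesis, this gives
\[ f(x,y) \le \frac{\sup \{ F(u,v) : F \in \cS \}}{g(u,x)\, h(y,v)} , \]
a bound independent of $f$, because $g$ and $h$ were chosen depending only on $x$ and $y$. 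Taking the supremum over $f \in \cS$ shows $s(x,y) < \infty$.

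With $s$ finite everywhere, $\cS$ is bounded entry-wise, so Lemma \ref{bounded entry-wise} shows that $s$ is compressed; consequently $C_s(x,y) = s(x,y)$ for all $x, y$, exactly as in the proof of Corollary \ref{compressed}, and in particular $C_s$ is finite everywhere, so Corollary \ref{cycles} applies to $s$. Finally, indecomposability forces $s(x,y) > 0$ for all $x, y$, since some $f \in \cS$ has $f(x,y) > 0$; hence, fixing any $x_0 \in X$, we have $C_s(x, x_0) = s(x,x_0) > 0$ for all $x$, so the positivity hypothesis of Corollary \ref{cycles}(a) is met. It then produces a function $d : X \to (0, \infty)$ with $s(x,y) \le d(x)/d(y)$, and since $f(x,y) \le s(x,y)$ for every $f \in \cS$, this is precisely the desired conclusion.
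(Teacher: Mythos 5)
Your proposal is correct and follows essentially the same route as the paper: first use indecomposability to propagate the single-pair bound at $(u,v)$ into entry-wise boundedness via $M \ge (g*f*h)(u,v) \ge g(u,x)f(x,y)h(y,v)$, then apply Lemma \ref{bounded entry-wise} to conclude that the supremum function $s$ is compressed and invoke Corollary \ref{cycles}(a). The only difference is that you spell out the intermediate facts ($C_s = s$, positivity of $s$ from indecomposability) that the paper leaves implicit.
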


\begin{proof}
First, we claim that $\cS$ is bounded entry-wise. Define $M = \sup\{f(u,v) : f \in \cS\}$, and choose any 
$x$, $y \in X$, and $f \in \cS$. 
Since $\cS$ is indecomposable, there exist $g$, $h \in \cS$ such that $g(u,x) > 0$ and $h(y,v) > 0$. 
Then 
$$ M \ge (g * f * h)(u,v) \ge g(u,x) f(x,y) h(y,v) , $$
and so 
$$ f(x,y) \le \frac{M}{g(u,x) h(y,v)} . $$
This proves the claim.

Now,  the function $s(x,y) = \sup\{ f(x,y) : f \in \cS \}$ is compressed by Lemma \ref{bounded entry-wise}.
Since it maps to $(0, \infty)$, Corollary \ref{cycles}(a) can be applied to complete the proof.
\end{proof}

\vspace{5mm}
\section{Binary diagonals}
\vspace{3mm}

The main result of \cite{LMR} is the following theorem. 
We recall that a square matrix is said to have a {\it binary diagonal} if 
its diagonal entries all come from the set $\{0, 1\}$. Furthermore, a square matrix is {\it binary} if 
its entries come from the set $\{0, 1\}$.

\begin{theorem} \cite{LMR}
Every indecomposable semigroup of nonnegative matrices with binary diagonals 
is up to a similarity a semigroup of binary matrices. Moreover, the similarity can be implemented
by an invertible, positive, diagonal matrix.
\end{theorem}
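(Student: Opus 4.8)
The plan is to regard $\cS$ as a matrix-like semigroup on the index set $X=\{1,\dots,n\}$, with matrix multiplication as the operation $*$; this is legitimate since $(S*T)(i,k)=\sum_j S_{ij}T_{jk}\ge S_{ij}T_{jk}$. For a semigroup of nonnegative matrices the indecomposability in the stated sense (no invariant coordinate subspace) is equivalent to the functional indecomposability used earlier, namely that for every $i,j\in X$ some $S\in\cS$ has $S_{ij}>0$. Because every diagonal entry of every member of $\cS$ lies in $\{0,1\}$, we have $\sup\{S_{i_0 i_0}:S\in\cS\}\le 1<\infty$ for any fixed $i_0$. Thus Theorem \ref{bounded_semigroup}, applied with $u=v=i_0$, furnishes a function $d:X\to(0,\infty)$, i.e.\ an invertible positive diagonal matrix $D=\mathrm{diag}(d(1),\dots,d(n))$, with $S_{ij}\le d(i)/d(j)$ for all $S\in\cS$ and all $i,j$. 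Conjugation by $D$ leaves the diagonal unchanged and sends the $(i,j)$ entry to $(d(j)/d(i))\,S_{ij}\le 1$. Replacing $\cS$ by $D^{-1}\cS D$, I am reduced to proving: an indecomposable semigroup with entries in $[0,1]$ and binary diagonal consists of binary matrices.

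The engine of the remaining argument is a walk-counting observation. If some $S\in\cS$ has $S_{ii}=S_{jj}=1$ with $i\ne j$, then among the length-$m$ walks from $i$ to $j$ there are the $m$ walks that loop at $i$, cross the edge $(i,j)$ once, and loop at $j$; each has weight $S_{ij}$, so $(S^m)_{ij}\ge m\,S_{ij}$. Since $S^m\in\cS$ has entries at most $1$, letting $m\to\infty$ forces $S_{ij}=0$ (and symmetrically $S_{ji}=0$). Hence every surviving positive off-diagonal entry is incident to a vertex whose diagonal entry, in that matrix, is $0$. To treat these I would pass to the entrywise closure $\overline{\cS}$, again a compact indecomposable semigroup with entries in $[0,1]$ and binary diagonal in which all suprema are attained. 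Given a positive entry $S_{ij}$, indecomposability yields $T$ with $T_{ji}>0$, so $(ST)_{ii}$ is positive and binary, hence equals $1$; a cluster point of the powers of $ST$ then produces an idempotent $E\in\overline{\cS}$ with $E_{ii}=1$. Analyzing the support of $E$ through $E^2=E$ (each nonzero off-diagonal entry in a row through a $1$-diagonal vertex must lead to a vertex with a vanishing row), together with indecomposability, should pin the remaining positive entries to the value $1$.

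The main obstacle is exactly this upgrade from $[0,1]$ to $\{0,1\}$. The binary-diagonal hypothesis only constrains sums of returning-path weights, of the form $\sum_k S_{ik}T_{ki}\in\{0,1\}$, and never an individual entry, so a fractional entry could a priori be balanced against other paths. The walk-counting observation disposes of entries between two $1$-diagonal vertices, but entries incident to a $0$-diagonal vertex resist it, and idempotents need not be binary on their own (for instance $\begin{pmatrix}1&0\\ c&0\end{pmatrix}$ is idempotent for every $c\in[0,1]$). It is here that indecomposability must be used decisively, and I expect the crux to be showing that within each irreducible support block the relevant entries cannot be fractional—equivalently, that the Perron structure forces spectral radius one, the further diagonal normalization being the one already realized by $D$.
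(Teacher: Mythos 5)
Your first half is sound and in fact coincides with the paper's strategy: viewing $\cS$ as a matrix-like semigroup, noting that binary diagonals give $\sup\{S_{i_0 i_0} : S \in \cS\} \le 1$, and invoking Theorem \ref{bounded_semigroup} to get a positive diagonal $D$ with all entries of $D^{-1}\cS D$ in $[0,1]$ is exactly how the paper (in its generalization of this theorem to functions on $X \times X$) produces the similarity. Your walk-counting observation $(S^m)_{ij} \ge m\, S_{ij}$ is also correct and is the right kind of idea. But, as you yourself say, it only kills off-diagonal entries joining two vertices at which the \emph{same} matrix has diagonal entry $1$, and the rest of your argument (entrywise closure, idempotent cluster points, ``should pin the remaining positive entries to the value $1$'') is not carried out. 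That unproved step is not a technicality: it is the crux of the theorem, so the proposal has a genuine gap.

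The idea that fills it is a two-matrix version of your walk count: for \emph{all} $f, g \in \cS$ and all indices $x, u$ one has $f(x,u)\,g(u,x) \in \{0,1\}$. Suppose not, say $0 < f(x,u)g(u,x) \neq 1$ with $x \neq u$. Since $(f*g)(x,x) \ge f(x,u)g(u,x) > 0$, the binary diagonal forces $(f*g)(x,x) = 1$, so the return path through $u$ contributes less than $1$ and some other index $y \notin \{u,x\}$ satisfies $f(x,y)g(y,x) \in (0,1)$. Likewise $(g*f)(u,u) \ge g(u,x)f(x,u) > 0$ forces $(g*f)(u,u) = 1$, while $(g*f)(u,y) \ge g(u,x)f(x,y) > 0$ and $(g*f)(y,u) \ge g(y,x)f(x,u) > 0$. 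Hence
$$ (g*f*g*f)(u,u) \ \ge\ \bigl((g*f)(u,u)\bigr)^2 + (g*f)(u,y)\,(g*f)(y,u) \ >\ 1 , $$
contradicting the binary diagonal of $g*f*g*f$; note this counts two distinct return paths $u \to u$, just as your argument counted $m$ walks $i \to j$. With this lemma your normalized case finishes in one line: if $f(x,y) > 0$, indecomposability gives $g \in \cS$ with $g(y,x) > 0$, so $f(x,y)g(y,x) = 1$, and since both factors lie in $[0,1]$ after conjugation by $D$, both equal $1$. (The paper packages this lemma as the statement that $f * e_u * g$ has binary diagonal, where $e_u$ is the diagonal rank-one idempotent adjoined via a maximality argument, but the inequality above is its entire content.) Your closure-and-idempotent route, by contrast, stalls exactly at the obstruction you identify: idempotents with binary diagonal need not be binary, and compactness alone never rules out fractional entries incident to $0$-diagonal vertices.
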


We now extend this result to our setting. We note that the proof presented below is much shorter than the proof in \cite{LMR}.
A nonnegative function $f$ on $X \times X$ is said to have a {\it binary diagonal} if 
$f(x,x) \in \{0, 1\}$ for all $x \in X$. 

\begin{theorem}
Let $\cS$ be an indecomposable semigroup of nonnegative functions on $X \times X$,
where the multiplication of $f$ and $g$ in $\cS$ is defined by
$$ (f*g)(x, y) = \sum_{z \in X} f(x,z) g(z,y) . $$
(Here the finiteness of the sum of nonnegative numbers is part of the hypothesis.)
If every function $f \in \cS$ has a binary diagonal, then there exists a function $d : X \mapsto (0, \infty)$ such that 
$$  \frac{f(x, y) d(y)}{d(x)} \in \left\{ 0, 1 \right\} $$
for all $f \in \cS$ and all $x, y \in X$. 
\end{theorem}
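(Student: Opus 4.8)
The plan is to follow the template of this section: first show that binary diagonals force $\cS$ to be bounded entry-wise, then feed the supremum function into Corollary \ref{cycles}, and finally upgrade the inequality $f(x,y)\le d(x)/d(y)$ to the binary alternative by exploiting the diagonal condition. The basic consequence of binary diagonals that I would record at the outset is this: for any $f,g\in\cS$ the product $f*g$ again lies in $\cS$, so $(f*g)(x,x)\in\{0,1\}$, and hence
$$ f(x,y)\,g(y,x)\ \le\ \sum_{z\in X} f(x,z)\,g(z,x)\ =\ (f*g)(x,x)\ \le\ 1 . $$
By indecomposability there is, for the pair $(y,x)$, some $h\in\cS$ with $h(y,x)>0$; applying the display with this fixed $h$ gives $f(x,y)\le 1/h(y,x)$ for every $f\in\cS$, so $\cS$ is bounded entry-wise. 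Thus $s(x,y)=\sup\{f(x,y):f\in\cS\}$ is finite, and by Lemma \ref{bounded entry-wise} it is compressed. Indecomposability applied to $(x,x)$ and to $(x,y)$, together with the binary condition, gives $s(x,x)=1$ and $s(x,y)>0$ for all $x,y$, while taking suprema in the display yields $s(x,y)\,s(y,x)\le 1$.

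The second step is to identify the right diagonal function, and for this I would aim to prove two rigidity statements: (I) $s(x,y)\,s(y,x)=1$ for all $x,y$; and (II) whenever $f\in\cS$ and $f(x,y)>0$, in fact $f(x,y)=s(x,y)$. Granting (I), the compressed inequality becomes an equality: from $s(z,y)\,s(y,x)\le s(z,x)$ and (I) one reads off $s(x,z)\le s(x,y)\,s(y,z)$, so $s$ is multiplicative, $s(x,y)\,s(y,z)=s(x,z)$. Setting $d(x)=s(x,x_0)$ for a fixed $x_0$ then gives $s(x,y)=s(x,x_0)/s(y,x_0)=d(x)/d(y)$, exactly as in the proof of Corollary \ref{cycles}(a). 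Granting (II) as well, every value $f(x,y)$ lies in $\{0,s(x,y)\}$, whence
$$ \frac{f(x,y)\,d(y)}{d(x)}\ =\ \frac{f(x,y)}{s(x,y)}\ \in\ \{0,1\}, $$
which is the desired conclusion.

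The main obstacle is therefore exactly (I) together with (II). Every elementary manipulation of the semigroup relation only ever produces upper bounds, namely $s(x,y)\,s(y,x)\le 1$ and $g(y,x)\le 1/f(x,y)$ for all $g$; in contrast (I) asks for the matching lower bound $s(y,x)\ge 1/f(x,y)$, and (II) asks that a single term $f(x,y)\,g(y,x)$ exhaust the diagonal sum $(f*g)(x,x)=1$ with no leakage into the other coordinates $z\ne y$. Both requirements come down to producing, for a given positive entry $f(x,y)=a$, an element $g$ with $g(y,x)=1/a$ and with $g(z,x)=0$ for every other $z$ in the support of the row $f(x,\cdot)$ — that is, a genuine one‑sided inverse at the coordinates that matter. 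I expect this to be the hard part, because it cannot be extracted from the tropical/sup bookkeeping alone; the natural device is to pass to the closure of $\cS$ in the topology of entry‑wise convergence, which is compact since $\cS$ is bounded entry‑wise, and to use that the minimal ideal of a compact semigroup is a union of groups, so that idempotents and inverses become available. The idempotent $e$ so produced satisfies $e(x,x)\in\{0,1\}$ and concentrates the diagonal sum, which is precisely what kills the leakage and delivers both (I) and (II). The delicate point to verify is that, for infinite $X$, entry‑wise multiplication need not be continuous on the closure, so this compactness step must be localised to the finitely many coordinates and elements relevant to a fixed pair $(x,y)$ (or replaced outright by a direct limiting construction of the inverse); carrying out that localisation carefully is where the real work lies.
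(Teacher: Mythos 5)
Your opening step (binary diagonals plus indecomposability give entry-wise boundedness) and your closing reduction (multiplicativity of $s$ from (I), and the binary alternative from (II)) are both sound, and they track the paper's skeleton, which at the corresponding point simply invokes Theorem \ref{bounded_semigroup}. But there is a genuine gap exactly where you say the real work lies: claims (I) and (II) are never proved, and the compactness device you sketch for them does not work as stated. The entry-wise closure of $\cS$ in $[0,M]^{X\times X}$ is compact by Tychonoff, but the operation $*$ involves infinite sums, so the closure need not be closed under $*$ at all (products of limit points can have divergent defining sums), and $*$ is not even separately continuous for entry-wise convergence: from $g_n \to g$ pointwise one cannot conclude $\sum_z f(x,z)g_n(z,y) \to \sum_z f(x,z)g(z,y)$; Fatou gives only a one-sided inequality. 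The structure theory you invoke (the minimal ideal of a compact semigroup being a union of groups) requires at least a compact right-topological semigroup, which is precisely what fails here; so the idempotents and inverses that were supposed to "kill the leakage" are never produced, and with them (I) and (II) collapse. Your own closing caveat concedes this, which means the proposal is a correct reduction of the theorem to an unproved core, not a proof.

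For comparison, the paper closes this hole by a purely algebraic device with no topology. By Zorn's lemma one may assume $\cS$ is maximal among semigroups with binary diagonals containing the given one (a union of a chain of such semigroups is again one); a maximal $\cS$ contains the identity function, and the key computation shows that for each $u \in X$ the atomic idempotent $e_u$, the characteristic function of $\{(u,u)\}$, can be adjoined: one proves $(f*e_u*g)(x,x) = f(x,u)g(u,x) \in \{0,1\}$ for all $f,g \in \cS$ by contradiction (if $0 < f(x,u)g(u,x) < 1$, some coordinate $y \notin \{u,x\}$ must contribute to the diagonal sum, and then $(g*f*g*f)(u,u) \geq ((g*f)(u,u))^2 + (g*f)(u,y)(g*f)(y,u) > 1$), whence maximality forces $e_u \in \cS$. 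Then for $f(x,y)>0$ and $g \in \cS$ with $g(y,x)>0$, the element $f*e_y*g \in \cS$ satisfies $(f*e_y*g)(x,x) = f(x,y)g(y,x)$, which is positive and binary, hence equal to $1$ --- exactly your "no leakage" statement --- and together with $f(x,y) \le d(x)/d(y)$ and $g(y,x) \le d(y)/d(x)$ from Theorem \ref{bounded_semigroup} this forces $f(x,y) = d(x)/d(y)$. If you want to salvage your write-up, replace the compactness step by this maximality-and-adjunction argument; the rest of your outline then goes through.
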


\begin{proof}
Clearly, we may assume that $\cS$ is maximal with respect to the inclusion.
Then $\cS$ necessarily contains the characteristic function of the diagonal of $X \times X$, 
which of course acts as an identity with respect to $*$. 
Given $u \in X$, let $e_u$ denote the characteristic function of $\{(u,u)\}$.  
We will prove that $e_u \in \cS$ for each $u \in X$.

First, we claim that $f * e_u * g$ has a binary diagonal for all $f, g \in \cS$, i.e.,  
$$ (f * e_u * g)(x,x) = f(x,u) g(u,x) \in \{0, 1\} $$ 
for all $x \in X$.  Since this holds for $x = u$, we assume that $0 < f(x,u) g(u,x) \neq 1$ for some $x \neq u$.
Then 
$$ (f*g) (x,x) = \sum_{y \in X} f(x,y) g(y,x)  \ge  f(x,u) g(u,x) > 0 , $$
so that $(f*g) (x,x) = 1$ and $f(x,u) g(u,x) < 1$. It follows that there exists $y \notin \{u, x\}$ such that  $f(x,y) g(y,x) \in (0, 1)$. 
Since  
$$ (g*f) (u,u) = \sum_{z \in X} g(u,z) f(z,u)  \ge  f(x,u) g(u,x) > 0 , $$
we have $(g*f)(u,u) = 1$.
Now observe that $(g*f)(u,y) \geq g(u,x) f(x,y) > 0$ and $(g*f)(y,u) \geq g(y,x) f(x,u) > 0$, so that 
$$ (g*f*g*f)(u,u) \geq ((g*f)(u,u))^2 +  (g*f)(u,y)(g*f)(y,u) > 1 . $$
This contradiction proves the claim. 

To prove that $e_u \in \cS$, we take any functions $f_1$, $f_2$, $\ldots$, $f_n$ in $\cS$ , and we
observe that, for each $x \in X$,
$$ (f_1 * e_u *  f_2 * e_u * \ldots * e_u * f_n)(x,x) = f_2(u,u) f_3(u,u)  f_{n-1}(u,u) 
( f_1 * e_u * f_n)(x,x) \in \{0, 1\} $$  
by the above. Therefore, the maximality of $\cS$ implies that  $e_u \in \cS$.

By Theorem \ref{bounded_semigroup}, there exists a function $d : X \mapsto (0, \infty)$ such that 
$$ f(x, y) \le \frac{d(x)}{d(y)} $$ 
for all $f \in \cS$ and $x$, $y \in X$.  
It remains to show that $f(x, y) = \frac{d(x)}{d(y)}$ provided $f(x,y) > 0$. 
Since $\cS$ is indecomposable, there exists $g  \in \cS$ such that $g(y,x) > 0$.  Since 
$$ (f*e_y*g)(x,x) = f(x,y) g(y,x) > 0 , $$
we have $ (f*e_y*g)(x,x) = 1$, so that $f(x,y) g(y,x) = 1$. 
However,  $f(x,y) \le  \frac{d(x)}{d(y)}$ and  $g(y,x) \le  \frac{d(y)}{d(x)}$,
and so we must have that $f(x,y)  =  \frac{d(x)}{d(y)}$.
\end{proof}

\vspace{5mm}
\section{Finite diagonals and finite traces}
\vspace{3mm}

We first recall one of the main results of \cite{PRW}. A semigroup $\cS$ of complex matrices is said to have 
 {\it finite diagonals} if all the diagonal entries of all the matrices in $\cS$ come
from a finite set. A collection $\cC$ of matrices is called {\it self-adjoint} if for each $T \in \cC$ we have $T^* \in \cC$. 
Here $T^*$ is just the conjugate transpose of $T$.

\begin{theorem}  \cite{PRW}
\label{finite_diagonals}
Suppose that a semigroup $\cS$ of nonnegative matrices has finite diagonals.
If $\cS$ is self-adjoint, then it is finite. Moreover, nonzero entries of matrices in $\cS$ are of the form
$\sqrt{\xi \eta}$, where $\xi$ and $\eta$ are diagonal values of some matrices in $\cS$.
\end{theorem}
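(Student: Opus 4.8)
The plan is to squeeze from the two hypotheses the rigid conclusion that \emph{every} $A\in\cS$ is a nonnegative partial isometry, and then to read off the admissible entry values from the nonnegativity. Throughout write $F$ for the finite set of diagonal values, and recall that self-adjointness gives $A^{T}\in\cS$, hence $A^{T}A,\ AA^{T}\in\cS$, for every $A\in\cS$.

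First I would show that each $A$ is a partial isometry, i.e.\ that all singular values $\sigma_i$ of $A$ lie in $\{0,1\}$. Fix $A$ and set $S=A^{T}A\in\cS$, a symmetric positive semidefinite matrix. Every power $S^{m}\in\cS$, so each diagonal entry of $S^{m}$ lies in $F$; in particular $\tr(S^{m})=\sum_i\sigma_i^{2m}\le n\max F$, which already forces $\sigma_i\le 1$. Now $\tr(S^{m})$ is a \emph{convergent} sequence, its limit being the number of $\sigma_i$ equal to $1$, and it takes values in the finite set of all sums of $n$ elements of $F$. A convergent sequence in a finite set is eventually constant, so $\sum_{0<\sigma_i<1}\sigma_i^{2m}$, which is strictly decreasing whenever it is nonempty, must vanish for large $m$. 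Hence no $\sigma_i$ lies in $(0,1)$, and $A$ is a partial isometry; equivalently $A^{T}A$ and $AA^{T}$ are orthogonal projections and $AA^{T}A=A$.

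The heart of the argument is to determine the nonzero entries of a nonnegative partial isometry $A$. Here I would first prove a structure lemma: a nonnegative orthogonal projection $P$ is, after a simultaneous permutation of the coordinates, a direct sum of rank-one projections $v_\alpha v_\alpha^{T}$ with the $v_\alpha$ unit, nonnegative, and disjointly supported. This is where nonnegativity enters decisively: on each connected component of the support graph of $P$ the restriction is again an irreducible nonnegative projection, and Perron--Frobenius forces the Perron eigenvalue $1$ to be simple with a positive eigenvector, so that component has rank one. Applying this to $P=A^{T}A$ yields disjointly supported unit vectors $v_\alpha$; setting $u_\alpha=Av_\alpha\ge 0$ gives $\langle u_\alpha,u_\beta\rangle=\langle v_\alpha,v_\beta\rangle=\delta_{\alpha\beta}$, so the $u_\alpha$ are orthonormal and, being nonnegative, disjointly supported as well. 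Consequently $A=\sum_\alpha u_\alpha v_\alpha^{T}$ with disjoint row- and column-blocks, and any nonzero entry satisfies $a_{ij}=(u_\alpha)_i(v_\alpha)_j$ for a single $\alpha$. Summing along the row and along the column then gives $(AA^{T})_{ii}=(u_\alpha)_i^{2}$ and $(A^{T}A)_{jj}=(v_\alpha)_j^{2}$, whence
\[
a_{ij}^{2}=(AA^{T})_{ii}\,(A^{T}A)_{jj}=\xi\eta,
\]
with $\xi,\eta\in F$ the diagonal values of the matrices $AA^{T},\ A^{T}A\in\cS$. This is exactly the asserted form $a_{ij}=\sqrt{\xi\eta}$.

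Finiteness is then immediate: every entry of every member of $\cS$ lies in the finite set $\{0\}\cup\{\sqrt{\xi\eta}:\xi,\eta\in F\}$, and for fixed size $n$ there are only finitely many matrices with entries drawn from a finite set, so $\cS$ is finite. I expect the main obstacle to be the structure lemma for nonnegative projections; once the Perron--Frobenius reduction to disjointly supported rank-one blocks is in hand, the entry formula and the counting step are routine, while the partial-isometry reduction is a soft argument exploiting only the finiteness of the diagonal values.
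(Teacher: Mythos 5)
Your proof is correct, and its skeleton coincides with the route the paper itself takes: the paper does not reprove this theorem directly (it quotes it from \cite{PRW}), but derives it from Lemma \ref{projections} together with Theorem \ref{extension_finite_diagonals}, i.e., first every member of a self-adjoint semigroup with finite diagonal data is a partial isometry, then the block structure of nonnegative projections yields the entry formula $\sqrt{\xi\eta}$ and hence finiteness. The differences are in how the two cited ingredients are handled, and they are to your credit since you make both self-contained. For the partial-isometry step, the paper's Lemma \ref{projections} argues in infinite dimensions with spectral measures and moments $\langle P^k e_i, e_i\rangle$ (case (ii)) or with finiteness of $\{\tr(P^n)\}$ (case (i)); your observation that in finite dimensions finite diagonals force $\tr(S^m)$ into the finite set of $n$-fold sums of elements of $F$, so that a convergent, eventually constant sequence kills all singular values in $(0,1)$, is a clean finite-dimensional version of case (i). For the structure of a nonnegative projection, the paper cites \cite[Lemma 5.1.9 or Lemma 8.7.12]{RaRo}, whereas you supply a Perron--Frobenius proof: this is sound, provided you note explicitly that $P_{ii}=\sum_k P_{ik}^2$, so a zero diagonal entry annihilates its whole row and the support graph decomposes $P$ block-diagonally into irreducible symmetric idempotent blocks, each of rank one by simplicity of the Perron eigenvalue. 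Finally, where the paper passes from $(P_{ij})^2=P_{ii}P_{jj}$ to equality in the Cauchy--Schwarz inequality and proportionality of rows of $S$, your decomposition $A=AP=\sum_\alpha u_\alpha v_\alpha^{T}$ with $u_\alpha=Av_\alpha$ orthonormal, nonnegative, hence disjointly supported, reaches the same block form $\sum_\alpha u_\alpha v_\alpha^{T}$ more directly; both then read off $a_{ij}^2=(AA^{T})_{ii}(A^{T}A)_{jj}$ with $AA^{T},A^{T}A\in\cS$, which is exactly the assertion.
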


We also recall one of the assertions of \cite[Theorem 8]{RR}. We state a stronger conclusion 
whose proof is similar to that of \cite[Theorem 8]{RR}.

\begin{theorem} \cite{RR}
If the range of the trace is finite on a semigroup $\cS$ of complex matrices, then 
$\cS$ is unitarily similar to a block-triangular semigroup whose diagonal blocks come from a finite set of matrices.
\end{theorem}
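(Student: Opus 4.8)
The plan is to reduce everything to the \emph{irreducible} case, where a short Burnside-type argument applies, and then to control the individual diagonal blocks by means of the Wedderburn decomposition of the algebra generated by $\cS$. First I would isolate the irreducible case as a lemma: if $\cS$ is an irreducible semigroup of $d \times d$ complex matrices on which $\tr$ has finite range, then $\cS$ is finite. Indeed, by Burnside's theorem the linear span of $\cS$ is the whole matrix algebra, so one can choose $T_1, \ldots, T_{d^2} \in \cS$ forming a basis. Since the bilinear form $(A,B) \mapsto \tr(AB)$ is nondegenerate, the map $T \mapsto (\tr(T T_1), \ldots, \tr(T T_{d^2}))$ is injective on $\cS$; as each coordinate $\tr(T T_j)$ lies in the finite set $\tr(\cS)$ (because $T T_j \in \cS$), the image is finite, and hence so is $\cS$.

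Next I would pass to the general case. Choosing a maximal chain of $\cS$-invariant subspaces and an orthonormal basis adapted to it yields a unitary $U$ for which $U^* \cS U$ is block upper triangular with irreducible diagonal blocks, the diagonal block semigroups being the irreducible constituents $\cS_1, \ldots, \cS_k$. It then suffices to prove that each $\cS_s$ is finite, and by the lemma this in turn reduces to showing that $\tr$ has finite range on each $\cS_s$. This separation of the blocks is the step I expect to be the main obstacle: writing $\mathcal{A} = \mathrm{span}_{\Cc}(\cS)$ for the finite-dimensional algebra generated by $\cS$, and recalling that $\tr(S)$ is the sum, with positive multiplicities $c_s$, of the traces of its diagonal blocks, one sees that finiteness of $\tr(\cS)$ does not visibly isolate the contribution of any single $\cS_s$.

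The resolution uses the central idempotents of the semisimple quotient. Let $N$ be the radical of $\mathcal{A}$; since $N$ is nilpotent, its elements have trace $0$, so $\tr$ descends to a trace $\overline{\mathrm{tr}}$ on the semisimple quotient $B = \mathcal{A}/N \cong \bigoplus_s M_{d_s}(\Cc)$. Writing $\pi$ for the quotient map, surjectivity gives $\mathrm{span}(\pi(\cS)) = B$, while $\mathrm{proj}_s(\pi(\cS)) = \cS_s$ and $\overline{\mathrm{tr}} = \sum_s c_s \, \mathrm{tr}_s$ with each $c_s \ge 1$. The identity $\epsilon_s$ of the $s$-th summand lies in $B = \mathrm{span}(\pi(\cS))$, so $\epsilon_s = \sum_j a_j \, \pi(S^{(j)})$ for fixed scalars $a_j$ and fixed $S^{(j)} \in \cS$. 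Given $T \in \cS_s$, pick $S \in \cS$ with $\mathrm{proj}_s(\pi(S)) = T$; then
$$ c_s \, \mathrm{tr}_s(T) = \overline{\mathrm{tr}}(\epsilon_s \, \pi(S)) = \sum_j a_j \, \overline{\mathrm{tr}}\big(\pi(S^{(j)}) \, \pi(S)\big). $$
Because $\pi$ is a homomorphism, each $\pi(S^{(j)}) \, \pi(S) \in \pi(\cS)$, so each term $\overline{\mathrm{tr}}(\pi(S^{(j)}) \, \pi(S))$ lies in the finite set $\overline{\mathrm{tr}}(\pi(\cS)) = \tr(\cS)$; as the $a_j$ are fixed, $c_s \, \mathrm{tr}_s(T)$ ranges over a finite set, and since $c_s \ge 1$, so does $\mathrm{tr}_s(T)$. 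Thus $\tr$ has finite range on each irreducible $\cS_s$, the lemma makes each $\cS_s$ finite, and the diagonal blocks of $U^* \cS U$ therefore come from the finite set $\cS_1 \cup \cdots \cup \cS_k$, completing the argument.
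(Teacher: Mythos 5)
Your proposal is correct, but note that for this particular statement the paper supplies no proof at all: the theorem is recalled from \cite{RR}, with only the remark that the stronger conclusion stated here has a proof ``similar to that of Theorem 8'' of \cite{RR}. So the honest comparison is between your argument and that external one. Your two structural steps are exactly the classical skeleton underlying \cite{RR}: the irreducible-case lemma (Burnside gives $\mathrm{span}\,\cS = \cM_d(\Cc)$, nondegeneracy of the trace form makes $T \mapsto (\tr (T T_j))_j$ injective, and each coordinate lies in the finite set $\tr(\cS)$, so $|\cS| \le |\tr(\cS)|^{d^2}$), and the unitary block-triangularization coming from an orthonormal basis adapted to a maximal chain of invariant subspaces. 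The part you correctly identified as the crux --- finiteness of $\tr(\cS)$ only controls the weighted sum $\sum_s c_s \tr_s$ and does not visibly bound any single block --- is also where your argument contributes its own idea: passing to the semisimple quotient $\mathcal{A}/N$ (legitimate, since every element of the nilpotent radical has trace zero, so $\tr$ descends), and multiplying $\pi(S)$ by the central idempotent $\epsilon_s$ written as a \emph{fixed} linear combination of elements of $\pi(\cS)$, which traps $c_s \tr_s(T)$ in the finite set $\bigl\{ \sum_j a_j v_j : v_j \in \tr(\cS) \bigr\}$. This computation is valid ($\epsilon_s$ is central, $\pi$ is multiplicative, and $c_s \ge 1$), so the proof is complete. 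Two pieces of bookkeeping you should make explicit: the summands of $B$ correspond to \emph{equivalence classes} of irreducible constituents, so when two diagonal blocks are equivalent representations, $\mathrm{proj}_s(\pi(\cS))$ is only a similar copy of the block semigroup --- harmless, since similarity preserves both traces and finiteness, but your indexing conflates blocks with classes; and a $1 \times 1$ zero block escapes Burnside, though there finiteness of the trace range is literally finiteness of the block semigroup.
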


Observe that the preceding theorem implies the following corollary that also gives 
the first assertion of Theorem \ref{finite_diagonals}. 

\begin{corollary}
\label{finite_trace}
If the range of the trace is finite on a self-adjoint semigroup $\cS$ of complex matrices, 
then $\cS$ is finite.
\end{corollary}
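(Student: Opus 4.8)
The plan is to derive Corollary \ref{finite_trace} from the block-triangularization theorem stated just before it. First I would invoke that theorem to put $\cS$ (up to unitary similarity, which preserves both finiteness and self-adjointness) into block-upper-triangular form in which the diagonal blocks range over a finite set of matrices. Self-adjointness is the crucial structural extra: since $\cS$ is self-adjoint, the unitary similarity yields a semigroup that is simultaneously block-upper-triangular and block-lower-triangular, hence block-diagonal. This is because if $\cS$ is block-triangular and closed under taking adjoints, then each $T \in \cS$ and its transpose-conjugate $T^*$ are both upper-triangular in the same block structure, forcing the off-diagonal blocks to vanish.

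Once I have reduced to the block-diagonal case, the problem decouples: $\cS$ is (isomorphic to) a subsemigroup of a direct product of the diagonal-block semigroups, each of which lives in a fixed finite-dimensional matrix algebra and is self-adjoint. So it suffices to show that a self-adjoint semigroup $\cS_0$ of complex matrices whose diagonal blocks come from a finite set is finite; the finiteness of the whole $\cS$ then follows because a subsemigroup of a finite product of finite semigroups is finite. At the level of a single diagonal block $T$, the hypothesis that the trace takes finitely many values, combined with self-adjointness, should force a uniform bound. Concretely, for each $T \in \cS_0$ the product $T^* T$ lies in $\cS_0$ (by self-adjointness and closure), and $\tr(T^* T) = \sum_{i,j} |T_{ij}|^2$ is the squared Frobenius norm of $T$; since $\tr$ has finite range on $\cS_0$, this quantity takes finitely many values, so $\cS_0$ is norm-bounded.

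A bounded set of matrices need not be finite, so the finiteness must come from a discreteness argument rather than mere boundedness. The key observation is that $\tr(T^* T)$ takes only finitely many values, and for any $S, T \in \cS_0$ the expression $\tr(S^* T)$ is the Hilbert-Schmidt inner product $\langle S, T\rangle$; using the identity
$$ \tr\bigl((S-T)^*(S-T)\bigr) = \tr(S^*S) - \tr(S^*T) - \tr(T^*S) + \tr(T^*T) , $$
each of the four terms on the right is of the form $\tr(A^* B)$ with $A, B \in \cS_0$, and $\tr(A^* B)$ ranges over a finite set because $A^* B \in \cS_0$ and $\tr$ has finite range there. Hence the Hilbert-Schmidt distances $\|S - T\|$ between distinct elements of $\cS_0$ take values in a finite set; since $\cS_0$ is bounded, an infinite $\cS_0$ would have a pair of distinct elements arbitrarily close together, which is impossible when the set of pairwise distances is finite and bounded away from $0$ for distinct elements. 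Therefore $\cS_0$ is finite, and so is $\cS$.

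The main obstacle I anticipate is the passage from boundedness to genuine finiteness: ruling out an infinite bounded self-adjoint semigroup requires exactly the discreteness of the pairwise Hilbert-Schmidt distances, and the argument hinges on the fact that $\tr(A^*B)$ stays in a finite set whenever $A, B$ range over the semigroup, which in turn rests on $A^* B \in \cS_0$. Care is needed to confirm that self-adjointness together with the block-diagonal reduction indeed keeps all the relevant products inside the semigroup so that the finite-trace hypothesis applies to each of them.
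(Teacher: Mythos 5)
Your proof is correct, but you do substantially more work than the paper, and the most interesting part of your argument is really an independent proof. The paper's entire proof is your first paragraph: after the unitary similarity given by the theorem of \cite{RR}, self-adjointness turns block-upper-triangular into block-diagonal, and a block-diagonal matrix is completely determined by its diagonal blocks, which by that same theorem come from a finite set --- so $\cS$ is finite on the spot. Everything you do afterward is unnecessary on this route; you seem not to have noticed that the reduction already finishes the job. What you build instead, however, is worth isolating: since $\cS$ is self-adjoint and multiplicatively closed, $A^*B \in \cS$ for all $A, B \in \cS$, so $\tr (A^*B)$ lies in the finite set $\tr (\cS)$; hence the squared Hilbert--Schmidt distances $\|S-T\|^2 = \tr (S^*S) - \tr (S^*T) - \tr (T^*S) + \tr (T^*T)$ take only finitely many values, so distinct elements of $\cS$ are uniformly separated, while $\cS$ is bounded because $\|S\|^2 = \tr (S^*S) \in \tr (\cS)$; and a bounded, uniformly separated subset of the finite-dimensional space $\cM_n(\Cc)$ is finite. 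This argument needs no block-triangularization and no reduction at all: it applies verbatim to the original $\cS$, on which the trace hypothesis is given. That is what your route buys --- a self-contained, elementary proof of the corollary that does not invoke \cite{RR}.

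One caution about how you actually deploy it: you run the discreteness argument ``at the level of a single diagonal block,'' after decoupling $\cS$ into its diagonal-block semigroups. The hypothesis that $\tr$ has finite range on $\cS$ does not by itself pass to an individual block: the total trace is the sum of the block traces, and a sum with finite range can have summands with infinite range. At the block level, the finiteness you want is already handed to you outright by the theorem's conclusion (the blocks come from a finite set), which makes the Hilbert--Schmidt argument redundant there. So either apply your discreteness argument directly to $\cS$ and skip the reduction entirely, or do the reduction and stop where the paper stops; as literally written, your hybrid justifies the key premise (finite range of $\tr$) on the wrong semigroup.
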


\begin{proof}
The semigroup $\cS$ unitarily similar to a block-diagonal semigroup whose diagonal blocks come from a finite set.
\end{proof}

We now give infinite-dimensional extensions of these results. We restrict our general setting to semigroups of (bounded) operators 
on the real or complex Hilbert space $l^2$. A collection $\cC$ of operators on $l^2$ is called {\it self-adjoint} 
if for each $T \in \cC$ we have $T^* \in \cC$. An operator $T$ on $l^2$ can be represented 
by an infinite real or complex matrix $(T_{i j})_{i, j \in \NN}$ with respect to the standard basis of $l^2$, 
and so the matrix of $T^*$ is just the conjugate transpose of the matrix of $T$.  

For a semigroup $\cS$ of operators on $l^2$, we denote by $\cS_+$ the set of all positive semidefinite operators in $\cS$,
and by $P(\cS)$ the set of all projections (self-adjoint idempotents) in $\cS$.
Clearly,  $P(\cS) \subseteq \cS_+ \subseteq \cS$. 

\begin{lemma}
\label{projections}
Let $\cS$ be a self-adjoint semigroup of operators on $l^2$. Suppose that either:

(i) $\cS$ consists of trace-class operators and the set $\{ \tr S : S \in \cS_+ \}$ is finite, or 

(ii) for each $i \in \NN$ the set $\cF_i = \{ S_{i i} : S \in \cS_+ \}$ is finite. 

\noindent
Then the following hold:

(a) $S S^*$ is a projection for every $S \in \cS$;

(b) Every nonzero $S \in \cS$ is a partial isometry, and so $\|S\| = 1$.

(c) Every idempotent in $\cS$ is a projection;

(d) The set $P(\cS)$ is commutative (and thus it is a subsemigroup of $\cS$);

(e) In the case (i) every member of $\cS$ is of finite rank not exceeding 
$$ r := \max \{ \tr S : S \in \cS_+ \} . $$
\end{lemma}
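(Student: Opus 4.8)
The plan is to reduce everything to part (a) and then harvest (b)--(e) from standard facts about partial isometries and idempotents. The observation underlying all of it is that for any $S \in \cS$ the operator $P := S S^*$ is positive semidefinite and, since $\cS$ is self-adjoint and multiplicatively closed, lies in $\cS$; hence $P \in \cS_+$, and every power $P^n$ is again positive and in $\cS$, so $P^n \in \cS_+$ as well. Consequently, in case (i) each $\tr P^n$ lies in the finite set $\{\tr S : S \in \cS_+\}$, and in case (ii) each diagonal entry $(P^n)_{ii}$ lies in the finite set $\cF_i$. I would prove (a) by showing that this finiteness forces the spectrum of $P$ into $\{0,1\}$ with no contribution from $(0,1)$.

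For (a) I would argue spectrally, writing $P = \int \lambda \, dE(\lambda)$. In case (i), $P$ is positive and trace-class with eigenvalues $\lambda_1, \lambda_2, \ldots \ge 0$ and $\tr P^n = \sum_k \lambda_k^n$; if some $\lambda_k > 1$ then $\tr P^n \to \infty$, contradicting finiteness of its range, so all $\lambda_k \le 1$, the sequence $\tr P^n$ is non-increasing, and a non-increasing sequence with finite range is eventually constant, forcing $\sum_k \lambda_k^n (1 - \lambda_k) = 0$ and hence each $\lambda_k \in \{0,1\}$. Thus $P$ is a finite-rank projection. In case (ii), I would run the same idea on the measures $\mu_i(\cdot) := \langle E(\cdot) e_i, e_i \rangle$, using $(P^n)_{ii} = \int \lambda^n \, d\mu_i(\lambda)$: finiteness of the range of $(P^n)_{ii}$ first excludes spectral mass in $(1, \infty)$ (else $(P^n)_{ii} \to \infty$), and then the non-increasing, finite-range moment sequence is eventually constant, forcing $\mu_i$ to be supported on $\{0,1\}$.

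The one genuinely operator-theoretic step in case (ii) is passing from ``$\mu_i((0,1)) = 0$ for every $i$'' to ``$P$ is a projection'': this says the positive operator $E((0,1))$ has all diagonal entries zero, and a positive semidefinite operator with zero diagonal is zero (apply this to $E((0,1))^{1/2}$), so $E((0,1)) = 0$ and $P = E(\{1\})$ is a projection. I expect this to be the main obstacle, mostly in book-keeping the spectral measures $\mu_i$; the monotonicity-plus-finite-range trick is the real crux.

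Granting (a), the rest is routine. For (b), $S S^*$ being a projection is exactly the condition that $S$ be a partial isometry (equivalently $S S^* S = S$, which follows since $(S S^* S - S)(S S^* S - S)^* = 0$ when $(S S^*)^2 = S S^*$), and a nonzero partial isometry has norm $1$. For (c), a nonzero idempotent $E \in \cS$ is a partial isometry by (b), so $\|E\| = 1$, and a norm-one idempotent on a Hilbert space is self-adjoint, hence a projection. For (d), given projections $P, Q \in P(\cS)$, the element $P Q \in \cS$ is a partial isometry, so $P Q = (P Q)(P Q)^* (P Q) = P Q \cdot Q P \cdot P Q = (P Q)^2$; thus $P Q$ is an idempotent in $\cS$, hence a projection by (c), whence $P Q = (P Q)^* = Q P$, giving commutativity and closure under products. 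Finally, for (e), in case (i) every $S \in \cS$ is a partial isometry, so $\rank S = \rank(S S^*) = \tr(S S^*) \le \max\{\tr S' : S' \in \cS_+\} = r$, since $S S^* \in \cS_+$ is the projection onto the final space of $S$.
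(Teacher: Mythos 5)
Your proposal is correct and follows essentially the same route as the paper: reduce everything to (a), prove (a) by observing that the powers $P^n = (SS^*)^n$ lie in $\cS_+$ and that finiteness of the trace (case (i)) or of the diagonal moment sequences $\int t^k \, d\mu_i$ (case (ii)) forces the spectrum of $P$ into $\{0,1\}$, and then harvest (b)--(e) from standard partial-isometry and idempotent facts. The only (cosmetic) divergence is in (d), where you show $PQ$ is idempotent via $PQ = (PQ)(PQ)^*(PQ) = (PQ)^2$ and invoke (c), while the paper instead applies (a) to get that $PQP$ is a projection and deduces $PQ = PQP$ from $[PQ(I-P)][PQ(I-P)]^* = 0$; both are equally short and valid.
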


\begin{proof}
(a) Given $S \in \cS$, the positive semidefinite operator $P = S S^*$ belongs to $\cS_+$.
Consider first the case (i). Since the nonzero eigenvalues of $P$ are all positive and the set $\{ \tr (P^n) : n \in \NN \}$ is finite, 
we conclude that the spectrum of $P$ is contained in $\{0, 1\}$, and thus $P$ is a projection.

For the case (ii), let $P = \int_{(0, \infty)} \! t \, dE(t)$ be the spectral representation of $P$, 
and let $\{e_i\}_{i \in \NN}$ be the standard basis of $l^2$. 
For each $i \in \NN$ we define the scalar Borel measure $\mu_i$ by $\mu_i(B) = \langle E(B) e_i, e_i \rangle$.
Then, for each $k \in \NN$, 
$$ \langle P^k e_i, e_i \rangle = \int_{(0, \infty)} \! t^k \, d\mu_i(t) = 
\int_{(0, 1)} \! t^k \, d\mu_i(t) + \int_{(1, \infty)} \! t^k \, d\mu_i(t) + \mu_i(\{1\}) . $$
Since $\langle P^k e_i, e_i \rangle \in \cF_i$ and the set $\cF_i$ is finite, we conclude that $\mu_i((0, 1) \cup (1, \infty)) = 0$.
It follows that $E((0, 1) \cup (1, \infty)) e_i = 0$ for all $i \in \NN$, and so $P = E(\{1\})$. This completes the proof of (a).

(b) This follows from (a).

(c) If $0 \neq E \in \cS$ is an idempotent, then $\|E\| = 1$ by (b), implying $E^* = E$.

(d) Let $P$ and $Q$ be projections in $\cS$. Then $PQP = PQ (PQ)^*$ is a projection in $\cS$ by (a). But then 
$$ [PQ(I-P)] [PQ(I-P)]^* = PQ (I-P)QP = PQP - (PQP)^2 = 0  $$
implying $PQ = PQP$, which means $PQ = (PQ)^* = QP$.

(e) Since $S S^*$ is a projection for every $S \in \cS$ by (a), we have $\rank (S S^*) = \tr (S S^*)$, and so $S S^*$ has finite rank not exceeding $r$.
Now observe that $S$ and $S S^*$ have the same range.
\end{proof}

\begin{theorem}
Let $\cS$ be a self-adjoint semigroup of trace-class operators on $l^2$ such that the set $\{ \tr S : S \in \cS \}$ is finite. 
If $\cS$ contains only finitely many projections, then $\cS$ is finite.
\end{theorem}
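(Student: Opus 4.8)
The plan is to reduce the assertion to the finite-dimensional case and then quote Corollary \ref{finite_trace}. Since $\cS_+ \subseteq \cS$, finiteness of $\{\tr S : S \in \cS\}$ forces finiteness of $\{\tr S : S \in \cS_+\}$, so hypothesis (i) of Lemma \ref{projections} is satisfied. In particular, by part (a) the operator $SS^*$ is a projection for every $S \in \cS$, and by part (e) every member of $\cS$ has finite rank not exceeding $r = \max\{\tr S : S \in \cS_+\}$. These two facts are the whole input from the previous section.

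First I would assemble all the relevant range projections. For each $S \in \cS$, both $SS^*$ and $S^*S$ lie in $\cS$ (because $\cS$ is self-adjoint and multiplicatively closed), are positive semidefinite, and are projections by Lemma \ref{projections}(a); hence both belong to $P(\cS)$. By hypothesis $P(\cS)$ is finite, say $P(\cS) = \{P_1, \ldots, P_m\}$, and each $P_i$ has finite rank by part (e). Let $F = P_1 \vee \cdots \vee P_m$ be their join; its range is the linear span of the finitely many finite-dimensional subspaces $\operatorname{ran} P_i$, so $D := \dim \operatorname{ran} F \le \sum_i \rank P_i < \infty$. This is the only step that uses the finiteness of $P(\cS)$: without it the individual operators are still of finite rank, but the subspace $\operatorname{ran} F$ carrying all of them could be infinite-dimensional, and the reduction would collapse.

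Next I would show that every $S \in \cS$ is supported on $\operatorname{ran} F$, that is, $S = FSF$. Indeed $\operatorname{ran} S = \operatorname{ran}(SS^*) \subseteq \operatorname{ran} F$ yields $FS = S$, while $(\ker S)^\perp = \operatorname{ran}(S^*S) \subseteq \operatorname{ran} F$ yields $SF = S$. Consequently $S$ is completely determined by its restriction to $\operatorname{ran} F$, the assignment $S \mapsto S|_{\operatorname{ran} F}$ is injective, and it is a homomorphism, since for $v \in \operatorname{ran} F$ one has $Tv \in \operatorname{ran} F$ and hence $(ST)v = S(Tv)$, so $(ST)|_{\operatorname{ran} F} = (S|_{\operatorname{ran} F})(T|_{\operatorname{ran} F})$. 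Thus $\cS$ is carried isomorphically onto a semigroup of $D \times D$ matrices. This image is self-adjoint, because $S^* \in \cS$ also satisfies $S^* = FS^*F$ and $(S|_{\operatorname{ran} F})^* = S^*|_{\operatorname{ran} F}$; and the trace is preserved, since $\tr(S|_{\operatorname{ran} F}) = \tr(FSF) = \tr(SF) = \tr S$ by $SF = S$ and cyclicity.

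Finally I would invoke the finite-dimensional result: the restricted semigroup $\{S|_{\operatorname{ran} F} : S \in \cS\}$ is a self-adjoint semigroup of complex matrices on which the trace takes only finitely many values, so by Corollary \ref{finite_trace} it is finite. Since $S \mapsto S|_{\operatorname{ran} F}$ is injective on $\cS$, it follows that $\cS$ itself is finite. The essential obstacle, and the conceptual heart of the argument, is precisely the passage in the second paragraph from finite rank of each element to a single common finite-dimensional subspace supporting the entire semigroup; once that is in place, the remaining verifications (compatibility of products, adjoints, and traces with restriction) are routine, and the theorem follows from the matrix version.
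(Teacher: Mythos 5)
Your proof is correct and follows essentially the same route as the paper: confine the whole semigroup to the finite-dimensional subspace spanned by the ranges of the finitely many (finite-rank) projections, using Lemma \ref{projections} to identify $SS^*$ and $S^*S$ as projections in $\cS$, and then apply Corollary \ref{finite_trace} to the resulting matrix semigroup. The paper states this reduction in three sentences; your writeup merely makes explicit the verifications (that $S = FSF$, and that restriction is an injective, trace-preserving, adjoint-compatible homomorphism) which the paper leaves to the reader.
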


\begin{proof}
By Lemma \ref{projections}, the projections of $\cS$ commute and are all finite-rank.  
Thus, there is a positive integer $n$ such that all the projections of $\cS$ have ranges contained in the same fixed $n$-dimensional subspace.  
Since the range of every member of $\cS$ is also the range of some projection in $\cS$, 
the whole semigroup $\cS$ is contained in the direct sum of $\cM_n (\Cc)$ and a zero block.
Now we apply Corollary \ref{finite_trace}. 
\end{proof}
 
In the following theorem we impose the additional assumption that operators are positive in the sense employed for maps on 
the Banach lattice $l^2$. 
 
\begin{theorem}
\label{extension_finite_diagonals}
Let $\cS$ be a self-adjoint semigroup of positive operators on the Banach lattice $l^2$. Suppose that either:

(i) $\cS$ consists of trace-class operators and the set $\{ \tr S : S \in \cS_+ \}$ is finite, or 

(ii) for each $i \in \NN$ the set $\{ S_{i i} : S \in \cS_+ \}$ is finite. 

\noindent
Then the nonzero entries of $S \in \cS$ are of the form $\sqrt{\xi \eta}$, where $\xi$ and $\eta$ 
are the diagonal entries of the projections $S S^*$ and $S^* S$, respectively. 
\end{theorem}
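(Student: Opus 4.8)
The plan is to read the conclusion as a statement purely about a single nonnegative partial isometry. By Lemma \ref{projections}(a), applied to $S$ and to $S^*\in\cS$, both $P:=SS^*$ and $Q:=S^*S$ are projections; since $S$ is a positive operator on $l^2$, these are projections with nonnegative matrix entries, and by Lemma \ref{projections}(b) $S$ is a partial isometry, so $S=SS^*S$ gives $S=PS=SQ$. Writing $e_i$ for the standard basis vectors, one has $P_{ii}=\|S^*e_i\|^2$, $Q_{jj}=\|Se_j\|^2$, and $S_{ij}=\langle Se_j,e_i\rangle=\langle Se_j,Pe_i\rangle$, so Cauchy--Schwarz already gives $S_{ij}\le\sqrt{P_{ii}Q_{jj}}$. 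The whole content is therefore to prove the reverse inequality for a nonzero entry, i.e. $S_{ij}^2=P_{ii}Q_{jj}$ whenever $S_{ij}>0$; this exhibits $S_{ij}=\sqrt{\xi\eta}$ with $\xi=P_{ii}=(SS^*)_{ii}$ and $\eta=Q_{jj}=(S^*S)_{jj}$, as claimed.

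First I would pass to blocks. Let $\{I_t\times J_t\}_t$ be the connected components of the bipartite graph on the row and column indices having an edge at each pair $(i,j)$ with $S_{ij}>0$. Then $S$ is block diagonal, $S=\bigoplus_t S_t$ with $S_t=S|_{I_t\times J_t}$, whence $P=\bigoplus_t S_tS_t^*$ and $Q=\bigoplus_t S_t^*S_t$. Thus each $S_t$ is itself a nonnegative partial isometry, its bipartite support graph is connected, and $S_tS_t^*$, $S_t^*S_t$ are symmetric nonnegative projections with connected supports (two rows sharing a column are joined in the row graph, and in a connected bipartite graph any two rows are linked by such a chain). Since every nonzero entry of $S$ lies in a unique block, it suffices to treat one block, i.e. to establish the rank-one structure below.

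The crux is the following lemma: a self-adjoint projection $R$ on $l^2$ with nonnegative entries and connected support has rank one. To prove it, work over the reals (the matrix of $R$ is real, so $V=\mathrm{ran}\,R$ is spanned by real vectors). For real $x\in V$ one has $Rx=x$, hence $\langle R|x|,|x|\rangle\ge\langle Rx,x\rangle=\|x\|^2=\||x|\|^2\ge\langle R|x|,|x|\rangle$, forcing equality throughout; because $R_{ij}\ge0$ this equality yields that $x_i$ and $x_j$ have the same sign whenever $R_{ij}>0$, and since $\mathrm{supp}(x)\subseteq\{i:R_{ii}>0\}$ (a zero diagonal entry forces a zero row) and the support is connected, every $x\in V$ is sign-constant. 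Thus $V=V_+\cup(-V_+)$ for the pointed cone $V_+=V\cap l^2_+$, and a vector space covered by a pointed cone together with its negative is at most one-dimensional (restricting to any two-dimensional subspace gives a contradiction), so $\dim V\le1$. In case (i) one may instead invoke Perron--Frobenius, since there $R$ is of finite rank by Lemma \ref{projections}(e).

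Finally I would assemble the computation. Applying the lemma to $S_tS_t^*$ and $S_t^*S_t$ shows $S_t$ has rank one, so $S_t=pq^*$ with $p,q\ge0$; that $S_tS_t^*=\|q\|^2pp^*$ and $S_t^*S_t=\|p\|^2qq^*$ are projections forces $\|p\|^2\|q\|^2=1$. Then for $i\in I_t$ and $j\in J_t$ one has $S_{ij}=p_iq_j$, $P_{ii}=\|q\|^2p_i^2$, and $Q_{jj}=\|p\|^2q_j^2$, so $P_{ii}Q_{jj}=(\|p\|^2\|q\|^2)\,p_i^2q_j^2=S_{ij}^2$, which is exactly the desired identity. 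I expect the main obstacle to be precisely the rank-one lemma under hypothesis (ii), where the blocks need not be finite and Perron--Frobenius is unavailable; the sign-propagation argument above is what replaces it.
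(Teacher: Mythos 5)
Your overall architecture is sound and genuinely different from the paper's: where the paper invokes the structure theorem for entrywise-nonnegative self-adjoint projections (\cite[Lemma 5.1.9 or Lemma 8.7.12]{RaRo}) applied to $SS^*$ and then uses equality in the Cauchy--Schwarz inequality to read off the block form of $S$, you decompose $S$ along the connected components of its bipartite support graph and attempt to prove the needed structure result --- your rank-one lemma --- from scratch. The reduction to blocks, the verification that $P=SS^*$ and $Q=S^*S$ decompose accordingly with connected supports, and the closing computation ($S_t=pq^*$, the projection property forcing $\|p\|^2\|q\|^2=1$, hence $S_{ij}^2=P_{ii}Q_{jj}$) are all correct, and your use of hypotheses (i) and (ii) only through Lemma \ref{projections} matches the paper exactly.

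The genuine gap is in the sign-propagation step of your rank-one lemma. From $\langle R|x|,|x|\rangle=\langle Rx,x\rangle$ you correctly conclude $x_ix_j\ge 0$ whenever $R_{ij}>0$, but this weak coherence does \emph{not} propagate through zero coordinates, so ``edge-coherent on a connected support graph'' does not by itself imply ``sign-constant'': on a path with vertices $1,2,3$ and edges $\{1,2\},\{2,3\}$, the vector $x=(1,0,-1)$ is coherent on every edge yet not sign-constant. As written, your inference is a non-sequitur. Fortunately the repair uses only what you already derived but did not exploit: the \emph{other} equality, $\langle R|x|,|x|\rangle=\||x|\|^2$, forces $R|x|=|x|$ (since $\|y\|^2-\langle Ry,y\rangle=\|(I-R)y\|^2$ for a projection $R$), so $|x|$ lies in $V$. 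Now observe that the support of any nonnegative vector $y\in V$ is closed under passing to neighbours: if $R_{ij}>0$ and $y_j>0$, then $y_i=(Ry)_i\ge R_{ij}y_j>0$. By connectedness of the support graph, $|x|$ is therefore strictly positive on the whole vertex set $\{i:R_{ii}>0\}$, i.e.\ $x$ has no zero coordinates there; once all relevant $x_i$ are nonzero, the coherence $x_ix_j>0$ on edges does propagate along paths and yields sign-constancy. With this inserted, your lemma --- and hence your whole proof, in both cases (i) and (ii) --- goes through, giving a self-contained alternative to the paper's citation of \cite{RaRo}.
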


\begin{proof}
Given $S \in \cS$, the operator $P = S S^*$ belongs to $P(\cS)$ by Lemma \ref{projections}.
Since $P$ is also a positive operator, it has the following form: up to a permutation similarity, 
there are (finitely or infinitely many) strictly positive vectors $\{x_k\}_k$ of finite or infinite length such that $x_k^* x_k = 1$ for all $k$ 
and the matrix $(P_{i j})_{i, j \in \NN}$ of $P$ is block diagonal with blocks $\{x_k x_k^*\}$ and a (possibly) zero block 
(see e.g. \cite[Lemma 5.1.9 or Lemma 8.7.12]{RaRo}). 
This means that $(P_{i j})^2 = P_{i i} \, P_{j j}$ whenever $P_{i j} \neq 0$. 
Therefore, we have $((S S^*)_{i j})^2 = (S S^*)_{i i} \, (S S^*)_{j j}$ or
$$ \left( \sum_{k=1}^{\infty} S_{i k} S_{j k} \right)^2 = \left( \sum_{k=1}^{\infty} S_{i k}^2 \right) 
   \left( \sum_{k=1}^{\infty} S_{j k}^2 \right) $$
if $S_{i k} \, S_{j k} \neq 0$ for some $k$. Hence, in this case the equality holds in the Cauchy-Schwarz inequality, so that 
the $j$-th row of the matrix of $S$ is a multiple of the $i$-the row. This implies that the matrix of $S$ must have the same form as in the finite-dimensional case (see the representation (3) in \cite[p. 1416] {PRW}): 
$$ \Delta_1 \left[ \begin{array}{cccc}
u_1 v_1^* & 0         &  0        & \ldots  \\
   0      & u_2 v_2^* &  0        & \ldots  \\
   0      &   0       & u_3 v_3^* & \ldots  \\
\vdots & \vdots &  \vdots  &  \ddots 
\end{array} \right]  \Delta_2^* , $$
where $\Delta_1$ and $\Delta_2$ are (infinite) permutation matrices, and for each $k$ the vectors $u_k$ and $v_k$ are both either strictly positive 
or zero (the rectangular blocks $u_k v_k^*$ are in general not square and there may only be one block).

Considering the projections $S S^*$ and $S^* S$ we conclude that $(u_k^* u_k) (v_k^* v_k) \in \{0, 1\}$, 
and the nonzero entries of $S$ are of the form $\sqrt{\xi \eta}$, 
where $\xi$ and $\eta$ are the diagonal entries of the projections $S S^*$ and $S^* S$, respectively. 
\end{proof}

 
It is easy to see that Theorem \ref{extension_finite_diagonals} implies Theorem \ref{finite_diagonals}. 
In fact, Theorem \ref{extension_finite_diagonals} implies the following finite-dimensional generalization of Theorem \ref{finite_diagonals}.

\begin{corollary}
Let $\cS$ be a  self-adjoint semigroup of nonnegative matrices such that the set $\{ \tr S : S \in \cS_+ \}$ is finite.
Then nonzero entries of matrices in $\cS$ are of the form $\sqrt{\xi \eta}$, 
where $\xi$ and $\eta$ are diagonal values of some matrices in $P(\cS)$. In particular, $\cS$ is finite.
\end{corollary}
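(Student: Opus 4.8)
The plan is to obtain this as a direct consequence of Theorem \ref{extension_finite_diagonals}, and then to promote the resulting description of the entries to honest finiteness by bounding the number of projections that $\cS$ can contain.

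First I would embed the finite matrices into operators on $l^2$: view each $n \times n$ matrix in $\cS$ as the operator on $l^2$ that acts in the obvious way on the first $n$ coordinates and annihilates the rest. This embedding respects multiplication and the adjoint, and it sends nonnegative matrices to positive operators on the Banach lattice $l^2$; each such operator is finite-rank and hence trace-class, with operator trace equal to the matrix trace. Thus $\cS$ becomes a self-adjoint semigroup of positive trace-class operators, the set $\cS_+$ corresponds to the positive semidefinite members, and the hypothesis that $\{ \tr S : S \in \cS_+ \}$ is finite places us in case (i) of Theorem \ref{extension_finite_diagonals}. That theorem immediately yields that the nonzero entries of any $S \in \cS$ are of the form $\sqrt{\xi \eta}$, where $\xi$ and $\eta$ are diagonal entries of $S S^*$ and $S^* S$. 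By Lemma \ref{projections}(a) the operators $S S^*$ and $S^* S$ are projections, and being positive semidefinite elements of $\cS$ they lie in $P(\cS)$; this establishes the first assertion.

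It remains to prove that $\cS$ is finite, and here the main obstacle is that our trace hypothesis concerns only $\cS_+$, so Corollary \ref{finite_trace} does not apply directly. The crucial observation is that $P(\cS)$ is itself finite. Indeed, by Lemma \ref{projections}(d) the projections in $P(\cS)$ pairwise commute, so, being a commuting family of normal operators in finite dimensions, they are simultaneously diagonalizable by a single unitary; in that basis each of them is a diagonal matrix with entries in $\{0, 1\}$, and there are at most $2^n$ such matrices, whence $P(\cS)$ is finite. Consequently the set of all diagonal entries of members of $P(\cS)$ is a finite subset of $[0, 1]$, so the numbers $\sqrt{\xi \eta}$ appearing above range over a finite set. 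Every matrix in $\cS$ therefore has all of its entries (zero or not) in one fixed finite set of reals, and since all these matrices are of size $n \times n$, there can be only finitely many of them. Hence $\cS$ is finite.
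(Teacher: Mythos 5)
Your proposal is correct and follows essentially the same route as the paper: apply Theorem \ref{extension_finite_diagonals} (case (i)) for the form of the nonzero entries, then deduce finiteness of $\cS$ from finiteness of the commutative family $P(\cS)$, which is simultaneously unitarily diagonalizable into binary diagonal matrices. Your write-up merely makes explicit two steps the paper leaves implicit --- the embedding of $n \times n$ matrices as finite-rank operators on $l^2$, and the final counting argument that entries drawn from a fixed finite set force $\cS$ itself to be finite.
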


\begin{proof}
The first assertion follows from Theorem \ref{extension_finite_diagonals}. 
To prove that $\cS$ is finite, just note that the commutative semigroup $P(\cS)$ of all projections in $\cS$ 
is unitarily similar to the semigroup of diagonal matrices with binary diagonals that is clearly a finite semigroup.
\end{proof}

It is tempting to conjecture that the condition (i) of Theorem \ref{extension_finite_diagonals} implies finiteness of the semigroup or at least the condition (ii). We conclude the paper with a counterexample. 

\begin{example} {\rm
Let $c = \frac{1}{\sqrt{2}}$ and $f = (c, c^2, c^3, \ldots) \in l^2$. For a positive integer $m$, let $g_m$ denote the vector obtained from 
$f$ by annihilating alternate segments of length $2^m$, i.e., 
$$ g_m = (\underbrace{c, c^2, \ldots, c^{2^m}}_{2^m}, \underbrace{0, 0, \ldots, 0}_{2^m}, 
\underbrace{c^{2^{m+1}+1}, c^{2^{m+1}+2}, \ldots, c^{3 \cdot 2^{m}}}_{2^m}, \underbrace{0, 0, \ldots, 0}_{2^m}, \ldots ) , $$
and let $h_m = f - g_m$, so that 
$$ h_m = (\underbrace{0, 0, \ldots, 0}_{2^m}, \underbrace{c^{2^{m}+1}, c^{2^{m}+2}, \ldots, c^{2^{m+1}}}_{2^m}, 
\underbrace{0, 0, \ldots, 0}_{2^m}, \underbrace{c^{3 \cdot 2^{m}+1}, c^{3 \cdot 2^{m}+2}, \ldots, c^{2^{m+2}}}_{2^m}, \ldots ) . $$
Then the operator 
$$ Q_m = \frac{g_m g_m^*}{\|g_m\|^2} + \frac{h_m h_m^*}{\|h_m\|^2}  $$
is a projection on $l^2$ of rank two. Defining also the rank-one projection $P = f f^*$ we now claim that the set 
$$ \cS = \{P, Q_1, Q_2, Q_3, \ldots\} $$ 
is a semigroup.

Since $f^* g_m = g_m^* g_m + h_m^* g_m = g_m^* g_m  = \|g_m\|^2$ and (similarly) $f^* h_m = \|h_m\|^2$, we have 
$$ P Q_m = f g_m^* + f h_m^* = P \ \ \ \textrm{and} \ \ \ \ Q_m P = (P Q_m)^* = P . $$
Now choose positive integers $m$ and $n$ such that $m < n$. We will show that $Q_m Q_n = P$, and then we also have
$Q_n Q_m = (Q_m Q_n)^* = P$ completing the proof of the claim.
A glance at the definitions of $g_m$ and $h_m$ gives that $\|h_m\| = c^{2^m} \|g_m\|$. Since $\|g_m\|^2 + \|h_m\|^2 = \|f\|^2 = 1$, we conclude that 
$$ \|g_m\|^2 = \left( c^{2^{m+1}}+1 \right)^{-1} \ \ \ \textrm{and} \ \ \ \  \|h_m\|^2 = c^{2^{m+1}} \left( c^{2^{m+1}}+1 \right)^{-1} . $$
Another look at the definitions reveals that $h_m^* g_n = c^{2^{m+1}} g_m^* g_n$, and so,
since $g_m^* g_n + h_m^* g_n = f^* g_n = \|g_n\|^2$, we have 
$$ g_m^* g_n = \left( c^{2^{m+1}}+1 \right)^{-1} \|g_n\|^2 = \|g_m\|^2 \|g_n\|^2  \ \ \ \ \textrm{and} $$
$$ h_m^* g_n = c^{2^{m+1}} \left( c^{2^{m+1}}+1 \right)^{-1} \|g_n\|^2 = \|h_m\|^2 \|g_n\|^2 . $$
Similarly, it follows from 
$h_m^* h_n = c^{2^{m+1}} g_m^* h_n$ \, and \, $g_m^* h_n + h_m^* h_n = f^* h_n = \|h_n\|^2$ that 
$$ g_m^* h_n = \left( c^{2^{m+1}}+1 \right)^{-1} \|h_n\|^2 = \|g_m\|^2 \|h_n\|^2  \ \ \ \textrm{and} $$
$$ h_m^* h_n = c^{2^{m+1}} \left( c^{2^{m+1}}+1 \right)^{-1} \|h_n\|^2 = \|h_m\|^2 \|h_n\|^2 . $$
Applying the last equalities we obtain that 
$$ Q_m Q_n = \frac{g_m^*g_n}{\|g_m\|^2 \|g_n\|^2} \, g_m g_n^* + \frac{g_m^*h_n}{\|g_m\|^2 \|h_n\|^2} \, g_m h_n^* + 
\frac{h_m^*g_n}{\|h_m\|^2 \|g_n\|^2} \, h_m g_n^* + \frac{h_m^*h_n}{\|h_m\|^2 \|h_n\|^2} \, h_m h_n^*  = $$
$$ = g_m g_n^* + g_m h_n^* + h_m g_n^* +  h_m h_n^* = (g_m + h_m) (g_n^* + h_n^*) = f f^* = P . $$
Observe that the set $\cF_i$ of all $(i,i)$ slots of members in $P(\cS) = \cS_+ = \cS$ is infinite, for example
$$ \cF_1 = \{c^2\} \cup \{ c^2 (c^{2^{m+1}}+1) : m \in \NN \} . $$
}
\end{example}
  
{\it Acknowledgments.} The authors were supported, respectively, by the Slovenian Research Agency and the NSERC of Canada. 
They express deep gratitude to the referee who improved the original version of Corollary \ref{cycles}
by stating its additive analogue in Proposition \ref{digraph}.
\vspace{4mm}

\vspace{3mm}

\noindent
Roman Drnov\v sek \\
Department of Mathematics, Faculty of Mathematics and Physics \\
University of Ljubljana \\
Jadranska 19, SI-1000 Ljubljana, Slovenia \\
e-mail : roman.drnovsek@fmf.uni-lj.si \\

\noindent
Heydar Radjavi \\
Department of Pure Mathematics \\
University of Waterloo \\
Waterloo, Ontario, Canada N2L 3G1  \\
e-mail : hradjavi@uwaterloo.ca


\begin{thebibliography}{9999}


\bibitem{GPRS}
H. Gessesse, A.I. Popov, H. Radjavi, E. Spinu, A. Tcaciuc, V. G. Troitsky, 
\textit{Bounded indecomposable semigroups of non-negative matrices}, 
Positivity 14 (2010), no. 3, 383--394. 

\bibitem{LMR}
L. Livshits, G. MacDonald, H. Radjavi,
\textit{Positive matrix semigroups with binary diagonals},
Positivity 15 (2011), no. 3, 411--440.

\bibitem{PRW}
A. I. Popov, H. Radjavi, P. Williamson,
\textit{Nonnegative matrix semigroups with finite diagonals},
Linear Algebra and its Applications 434 (2011), no. 6, 1409--1424.

\bibitem{RaRo} 
H. Radjavi, P. Rosenthal,  
\textit{Simultaneous Triangularization}, 
Springer-Verlag, New York, 2000.

\bibitem{RR} 
H. Radjavi, P. Rosenthal, 
\textit{Limitations on the size of semigroups of matrices}, 
Semigroup Forum 76 (2008), no. 1, 25--31.
 
\end{thebibliography}
\end{document}